\newcommand{\SL}[2][\mathbb{Z}]{\mathrm{SL}_{#2}(#1)}
\newcommand{\GL}[2][\mathbb{Z}]{\mathrm{GL}_{#2}(#1)}
\newcommand{\norm}[1]{\lVert#1\rVert}
\newcommand{\SLdZ}{\mathrm{SL}_{d} (\mathbb{Z})}
\newcommand{\SLOK}{\mathrm{SL}_{2} (\mathcal{O}_K)}
\newcommand{\R}{\mathbb{R}}
\newcommand{\Z}{\mathbb{Z}}
\newcommand{\N}{\mathbb{N}}
\newcommand{\Q}{\mathbb{Q}}
\newcommand{\CAT}[1]{\mathrm{CAT}(#1)}
\newcommand{\Id}{\mathrm{Id}}
\newcommand{\modulus}[1]{\left| #1 \right|}
\newcommand{\BS}[1]{\mathrm{BS}(1,#1)}
\newcommand{\CLF}{\mathrm{CLF}}
\newcommand{\Aut}{\mathrm{Aut}}
\newcommand{\TCLF}{{\mathrm{TCL}}}
\newcommand{\RCLF}{\mathrm{RCL}}
\newcommand{\transpose}{^{\mathrm{T}}}
\newcommand{\ldist}{\mathfrak{d}}
\newcommand{\invdist}{\overline{\mathfrak{d}}}
\newtheorem{thm}{Theorem}[section]
\newtheorem{cor}[thm]{Corollary}
\newtheorem{lemma}[thm]{Lemma}
\newtheorem{prop}[thm]{Proposition}
\newtheorem{thmspecial}{Theorem}
\theoremstyle{definition}
\newtheorem{example}[thm]{Example}
\newtheorem*{defn}{Definition}
\newenvironment{remark}[1][Remark]{\begin{trivlist}
\item[\hskip \labelsep {\bf #1}:]}{\end{trivlist}}
\newenvironment{question}[1][Question]{\begin{trivlist}
\item[\hskip \labelsep {\bf #1}:]}{\end{trivlist}}
\title{Conjugacy length in group extensions}
\author{Andrew Sale} 
\thanks{The author was supported by the EPSRC}
\begin{document}

\begin{abstract}
Determining the length of short conjugators in a group can be considered as an effective version of the conjugacy problem. The conjugacy length function provides a measure for these lengths. We study the behaviour of conjugacy length functions under group extensions, introducing the twisted and restricted conjugacy length functions. We apply these results to show that certain abelian-by-cyclic groups have linear conjugacy length function and certain semidirect products $\Z^d \rtimes \Z^k$ have at most exponential (if $k >1$) or linear (if $k=1$) conjugacy length functions.
\end{abstract}

\maketitle

Following a recent trend in geometric group theory towards producing effective results, we study a geometric question closely associated to the conjugacy problem of Max Dehn \cite{Dehn12}, which could be described as an \emph{effective conjugacy problem}. We ask if one can find an estimate of the length of short conjugators between elements in a group. This is measured by the conjugacy length function.

Suppose word-lengths in a group $\Gamma$, with respect to the given finite generating set $X$, are denoted by $\modulus{\cdot}$. The \emph{conjugacy length function} was introduced by T. Riley and is the minimal function $\CLF_\Gamma : \N \rightarrow \N$ which satisfies the following: if $u$ is conjugate to $v$ in $\Gamma$ and $\modulus{u}+\modulus{v}\leq n$ then there exists a conjugator $\gamma \in \Gamma$ such that $\modulus{\gamma} \leq \CLF_\Gamma(n)$. One can define it more concretely to be the function which sends an integer $n$ to
		\[\max \big\{ \min \{ \modulus{w} : wu=vw \} : \modulus{u} + \modulus{v} \leq n \ \textrm{and $u$ is conjugate to $v$ in $\Gamma$} \big\}.\]
We know various upper bounds for the conjugacy length function in certain classes of groups. For example, Gromov--hyperbolic groups have a linear upper bound; this is demonstrated by Bridson and Haefliger \cite[Ch.III.$\Gamma$ Lemma 2.9]{BH99}. They also show that $\CAT{0}$ groups have an exponential upper bound for conjugacy length \cite[Ch.III.$\Gamma$ Theorem 1.12]{BH99}. In \cite{Sale12} we obtained a cubic upper bound for the conjugacy length function of a free solvable group, and obtained an expression for it in wreath products of groups. Jing Tao \cite{Tao11} showed that mapping class groups enjoy a linear conjugacy length function, expanding on previous work of Masur and Minsky \cite{MM00} for pseudo-Anosov elements. Behrstock and Dru\c{t}u \cite{BD11} had recently also shown this upper bound for purely reducible elements. Work by Crisp, Godelle and Wiest \cite{CGW09} on the complexity of the conjugacy problem in right-angled Artin groups also show that these groups have a linear conjugacy length function.

In 1977 Collins and Miller showed that the solubility of the conjugacy problem does not pass to finite index subgroups or to finite extensions \cite{CM77}. Recent work of Bogopolski, Martino and Ventura investigate certain group extensions and what circumstances are necessary for the solubility of the conjugacy problem to carry through to the extension \cite{BMV10}. The extensions they study require a strong assumption to be placed on the structure of centralisers in the quotient group, limiting the application of their work. However, their result applies in cases where the quotient is, for example, cyclic (or indeed finite), enabling them to study such groups as abelian-by-cyclic groups or free-by-cyclic groups.

In this paper we look at the length of short conjugators in extensions similar to those considered in \cite{BMV10}, but instead of their assumption on centralisers in the quotient we place a geometric condition on them. Consider the group extension
\begin{equation*}
1 \longrightarrow F \overset{\alpha}{\longrightarrow} G \overset{\beta}{\longrightarrow} H \longrightarrow 1
\end{equation*}
Bogopolski, Martino and Ventura showed that solubility of the conjugacy problem carries through to $G$ provided certain conditions hold. These conditions include:
\begin{itemize}
\item there is an algorithm to determine when two elements in $F$ are conjugate in $G$ (the restricted conjugacy problem);
\item the twisted conjugacy problem is solvable in $F$;
\item the conjugacy problem is solvable in the quotient $H$.
\end{itemize}
We define the restricted and twisted conjugacy length problems in Section \ref{sec:tcl and rcl} where we also define the corresponding analogues of the conjugacy length function for these problems.

As commented above, the results of \cite{BMV10} only apply to group extensions in which the quotient group $H$ satisfies certain conditions on the structure of centralisers of elements in $H$. In particular, the centraliser in $H$ of any $h\in H$ must be virtually cyclic. We can relax this condition slightly, replacing it with a geometric condition, asking for a function $\rho:G \to [0,\infty)$ which, roughly speaking, measures the diameter of the fundamental domain inside $Z_H(\beta(u))$ under the action of its subgroup $\beta(Z_G(u))$ by left-multiplication, for any $u \in G$, where $Z_G(u)$ denotes the centraliser in $G$ of $u$ (see Section \ref{sec:group ext} for a precise definition). If we let 
		\[\rho_n = \max \{ \rho(u) \mid u \in G,\  \modulus{u} \leq n \}\]
then we show the following:

\begin{thmspecial}\label{thmspecial:group ext}
The conjugacy length function of $G$ satisfies
		\[\CLF_G(n) \leq \CLF_H(n) + \max \left\{ \RCLF_F^G(n),\rho_n + \invdist_F^G(\TCLF_F\big(2\delta_F^G(n+\rho_n);A_G^{(n)})\big)\right\}\]
where $\TCLF_F$ is the twisted conjugacy length function of $F$, $\RCLF_F^G$ is the restricted conjugacy length function of $F$ in $G$, which are defined in Section \ref{sec:tcl and rcl}, and $\delta_F^G$ and $\invdist_F^G$ are subgroup distortion functions, defined in Section \ref{sec:distortion}.
\end{thmspecial}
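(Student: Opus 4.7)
My approach splits on whether $u \in F$. If $u \in F$, then $v = gug^{-1} \in F$ too by normality, and a $G$-conjugator of length at most $\RCLF_F^G(n)$ exists by the very definition of the restricted conjugacy length function. The substantive case is $u \notin F$, where the plan is to produce a conjugator factoring as an ``$H$-part'' of length $\CLF_H(n)$, a ``centraliser correction'' of length $\rho_n$, and an ``$F$-part'' controlled by twisted conjugacy and subgroup distortion. First, apply $\CLF_H$ to the projection: there is $h \in H$ with $h\beta(u)h^{-1} = \beta(v)$ and $\modulus{h}_H \leq \CLF_H(n)$. Fixing a section of $\beta$, lift $h$ to $\tilde h \in G$ of length at most $\CLF_H(n)$. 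The problem then reduces to finding a short $y \in G$ conjugating $u$ to $\tilde h^{-1}v\tilde h$, since $\tilde h\, y$ will conjugate $u$ to $v$.

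Next, exploit $\rho$. Since $\beta(u) = \beta(\tilde h^{-1}v\tilde h)$, any such $y$ satisfies $\beta(y) \in Z_H(\beta(u))$. Replacing $y$ by $yz$ for a suitable $z \in Z_G(u)$ preserves the conjugation relation, and by the defining property of $\rho$ the element $z$ may be chosen so that $\modulus{\beta(yz)}_H \leq \rho_n$. Lift this projection through the section to $\tilde\sigma \in G$ of length at most $\rho_n$ and decompose $y = \phi\tilde\sigma$ with $\phi = y\tilde\sigma^{-1} \in F$. Then $\modulus{\tilde h\, y}_G \leq \CLF_H(n) + \rho_n + \invdist_F^G(\modulus{\phi}_F)$, and the remaining task is to bound $\modulus{\phi}_F$.

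Set up the twisted conjugacy in $F$. The equation $\phi(\tilde\sigma u\tilde\sigma^{-1})\phi^{-1} = \tilde h^{-1}v\tilde h$ takes place in the $F$-coset of $\beta(u)$; writing both sides as $F$-shifts of $u$ recasts it as $\phi\, a\, \varphi(\phi)^{-1} = b$ in $F$, where $\varphi$ is conjugation by $u$, so $\varphi \in A_G^{(n)}$ since $\modulus{u}\leq n$. Applying $\TCLF_F$ gives $\modulus{\phi}_F \leq \TCLF_F(\modulus{a}_F + \modulus{b}_F; A_G^{(n)})$. Estimating the $G$-lengths of $a,b$ using $\modulus{u},\modulus{v}\leq n$ and $\modulus{\tilde\sigma}_G \leq \rho_n$, then converting through $\delta_F^G$, should yield $\modulus{a}_F + \modulus{b}_F \leq 2\delta_F^G(n+\rho_n)$; finally $\invdist_F^G$ converts the $F$-bound on $\phi$ to the required $G$-bound, giving exactly the second alternative in the max.

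The main obstacle is this last estimate: arranging the decomposition so that the $\tilde h$-dependence stays confined to the leading additive $\CLF_H(n)$ term in the outer bound and does not propagate into the $\TCLF_F$ input. The naive three-factor decomposition $\tilde h \cdot \phi \cdot \tilde\sigma$ pollutes $b$ with a $\tilde h^{-1}\cdot v\cdot\tilde h$ commutator-type word whose $G$-length involves $2\CLF_H(n)$, which is too large; a more delicate choice, so that $a$ and $b$ are expressed using only factors of size $n$ and $\rho_n$, is the crux of the argument. Once this is in hand, the rest is routine bookkeeping of subgroup distortion.
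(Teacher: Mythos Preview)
Your approach mirrors the paper's proof (Theorem~3.1) essentially step for step. The paper also splits off the case $\beta(u)=\beta(v)=e_H$, handled by $\RCLF_F^G$; then, when $\beta(u)=\beta(v)\neq e_H$, it picks a coset representative $h$ with $\modulus{h}\leq\rho(u)$ and reduces to $\varphi_u$--twisted conjugacy in $F$ between $f=u^{-1}v$ and $f_h=[u,h]$; finally, in the general case it first conjugates $v$ by a lift $g_0$ of a short $H$--conjugator to obtain $v_0=g_0^{-1}vg_0$ with $\beta(v_0)=\beta(u)$, and then invokes the previous case on the pair $u,v_0$. Your single treatment of the $u\notin F$ case via $\CLF_H$ followed by $\rho$ just merges the paper's Cases~2 and~3.

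The obstacle you isolate is real, and the paper does not actually overcome it. When Case~2 is applied to $u$ and $v_0$, the element $f=u^{-1}v_0$ feeding into the twisted conjugacy bound has $G$--length at most $\modulus{u}+\modulus{v_0}\leq n+2\CLF_H(n)$, not $n$; so the argument as written really delivers $\delta_F^G\big(n+2\CLF_H(n)+\rho_n\big)$ inside the $\TCLF_F$, not $\delta_F^G(n+\rho_n)$. This is a minor imprecision in the stated bound. It is harmless for every application in the paper, since in all of them $H$ is abelian (in fact $\Z$ or $\Z^k$), so $\CLF_H\equiv 0$ and the extra term vanishes. There is no ``more delicate choice'' hidden in the paper's argument; you should simply carry the extra $\CLF_H(n)$ through the $\delta_F^G$ and note that it disappears in the cases of interest.
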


We use Theorem \ref{thmspecial:group ext} to study conjugacy length in certain abelian-by-cyclic groups and abelian-by-abelian groups. Following the work of Bieri and Strebel \cite{BS78}, the finitely presented, torsion-free, abelian-by-cyclic groups are given by presentations of the form
		\[\Gamma_M = \langle t, a_1, \ldots , a_d \mid [a_i,a_j]=1,ta_it^{-1}=\varphi_M(a_i); i,j=1,\ldots,d\rangle\]
where $M=(m_{ij})$ is a $d \times d$ matrix with integer entries and non-zero determinant and $\varphi_M(a_i)=a_1^{m_{1i}}\ldots a_d^{m_{di}}$ for each $i=1,\ldots , d$.

\begin{thmspecial}\label{thmspecial:ab-by-cyc clf}
Suppose $M$ is a diagonalisable matrix, all of whose eigenvalues have absolute value greater than $1$. Then there exists a constant $C$ depending on $M$ such that
		\[\CLF_{\Gamma_M}(n) \leq Cn.\]
\end{thmspecial}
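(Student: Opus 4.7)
Apply Theorem~\ref{thmspecial:group ext} to the short exact sequence
\[1 \longrightarrow \Z^d \overset{\alpha}{\longrightarrow} \Gamma_M \overset{\beta}{\longrightarrow} \Z \longrightarrow 1,\]
where $\beta(t)=1$ and $\beta(a_i)=0$, and bound each function appearing on the right-hand side of the stated inequality. Since $\Z$ is abelian, $\CLF_\Z(n)=0$, and everything reduces to controlling the distortion $\delta_{\Z^d}^{\Gamma_M}$, its inverse $\invdist_{\Z^d}^{\Gamma_M}$, the twisted and restricted conjugacy length functions $\TCLF_{\Z^d}$ and $\RCLF_{\Z^d}^{\Gamma_M}$, and the quantity $\rho_n$.

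The hypothesis that $M$ is diagonalisable with every eigenvalue of modulus exceeding $1$ drives every estimate. The subgroup $\Z^d$ is exponentially distorted in $\Gamma_M$ (any element of $\Z^d$-norm $N$ can be written in $\Gamma_M$ using $O(\log N)$ generators via conjugation by powers of $t$), giving $\delta_{\Z^d}^{\Gamma_M}(n)\leq C_1 e^{C_2 n}$ and $\invdist_{\Z^d}^{\Gamma_M}(N)\leq C_3\log N$, so the composition $\invdist_{\Z^d}^{\Gamma_M}\circ\delta_{\Z^d}^{\Gamma_M}$ is linear. For the twisted function, two elements $u,v\in\Z^d$ are $M^k$-twisted conjugate iff $(I-M^k)w=v-u$ has an integer solution; writing $M=PDP^{-1}$, each eigenvalue $1-\lambda_i^k$ of $I-M^k$ is bounded away from $0$ uniformly in $k\neq 0$ (using $|\lambda_i|>1$ for $k>0$ and $|\lambda_i^k|\leq\lambda_{\min}^{-|k|}<1$ for $k<0$), so $\|(I-M^k)^{-1}\|$ is uniformly bounded and hence $\TCLF_{\Z^d}(m;A_{\Gamma_M}^{(n)})\leq C_4 m$. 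For the restricted function, $u,v\in\Z^d$ are conjugate in $\Gamma_M$ iff $v=M^k u$ for some $k\in\Z$, and the eigenvalue condition forces $|k|=O(n)$, giving $\RCLF_{\Z^d}^{\Gamma_M}(n)=O(n)$. For $u$ with $\beta(u)=k\neq 0$ the cyclic subgroup $\langle u\rangle\leq Z_{\Gamma_M}(u)$ projects onto $k\Z\leq Z_\Z(k)=\Z$, so $\rho(u)\leq|k|\leq|u|\leq n$ and hence $\rho_n\leq n$.

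Plugging these bounds into Theorem~\ref{thmspecial:group ext}, the exponential $\delta_{\Z^d}^{\Gamma_M}$ and the logarithmic $\invdist_{\Z^d}^{\Gamma_M}$ cancel, producing the linear bound $\CLF_{\Gamma_M}(n)\leq Cn$. The principal obstacle is the uniform bound on $\|(I-M^k)^{-1}\|$ over all $k\neq 0$, which is precisely where the eigenvalue hypothesis plays its role: an eigenvalue of modulus $1$ would make this norm unbounded and the argument would collapse. A secondary delicate point is that elements $u\in\Z^d$ have $\beta(u)=0$, so $\rho(u)$ misbehaves; that case must instead be handled through the restricted term $\RCLF_{\Z^d}^{\Gamma_M}$, which is exactly why the maximum appears in the statement of Theorem~\ref{thmspecial:group ext}.
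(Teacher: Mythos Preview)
Your overall strategy---apply the extension theorem with cyclic quotient and bound each ingredient---is the paper's strategy. But there is a genuine gap at the very first line: the short exact sequence you write down does not exist. Under the hypothesis of the theorem every eigenvalue of $M$ has modulus exceeding $1$, so $|\det M|=\prod_i|\lambda_i|>1$ and $M\notin\GL_d(\Z)$. Hence $A_0=\langle a_1,\dots,a_d\rangle\cong\Z^d$ is \emph{not} normal in $\Gamma_M$ (conjugation by $t^{-1}$ sends $A_0$ into the strictly larger $A_1$), and multiplication by $M$ is an injective endomorphism of $\Z^d$ but not an automorphism. The kernel of $\beta$ is the ascending union $A=\bigcup_{p\geq 0}t^{-p}A_0t^p$, which is never finitely generated in the cases covered by the theorem. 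So neither your twisted-conjugacy setup (which requires $M^k\in\Aut(\Z^d)$) nor your distortion analysis is taking place in the correct group.

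The paper works instead with $A$, equipped with the metric restricted from $\Gamma_M$; with that choice $\delta_A^{\Gamma_M}$ and $\invdist_A^{\Gamma_M}$ are linear by definition, and the burden shifts entirely to establishing $\RCLF_A^{\Gamma_M}(n)\preceq n$ and $\TCLF_A(n;\varphi_M^i)\preceq n+|i|$ in that metric (Propositions~\ref{prop:ab-by-cyc RCLF} and~\ref{prop:ab-by-cyc TCLF}). Both require controlling word length in $\Gamma_M$ of elements of $A\setminus A_0$, which the paper does via the normal form $t^{-p}u_at^p$ and the quasi-isometry with the Farb--Mosher horocyclic product $X_M$ (Lemma~\ref{lem:ab-by-cyc distortion}). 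Your eigenvalue observations---uniform invertibility of $I-M^k$ and exponential growth of $\|M^k\|$---are the right underlying mechanisms, but they have to be transported into $A$ and measured in the $\Gamma_M$-metric, which is where the real work lies.
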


It is worth noting that in particular solvable Baumslag--Solitar groups are covered by Theorem \ref{thmspecial:ab-by-cyc clf} and hence have a linear conjugacy length function. One could also apply Theorem \ref{thmspecial:group ext} to show that lamplighter groups $\Z_q \wr \Z$ have a linear conjugacy length function. This result however is given in \cite{SaleThesis} using different techniques.

Finally we look at a family of abelian-by-abelian groups which are the semidirect product of two free abelian groups.

\begin{thmspecial}\label{thmspecial:SOL}
Let $\Gamma = \Z^d \rtimes_\varphi \Z^k$, where the image of $\varphi : \Z^k \hookrightarrow \SLdZ$ is contained in an $\R$--split torus $T$. Then there exist positive constants $A,B$ such that
\begin{enumerate}[label={({\arabic*})}]
\item if $k=1$ then $\CLF_\Gamma(n) \leq Bn$;
\item if $k>1$ then $\CLF_\Gamma(n) \leq A^n$.
\end{enumerate}
\end{thmspecial}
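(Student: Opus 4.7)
The plan is to apply Theorem \ref{thmspecial:group ext} to the natural split extension
\[
1 \longrightarrow \Z^d \longrightarrow \Gamma \longrightarrow \Z^k \longrightarrow 1,
\]
taking $F = \Z^d$, $G = \Gamma$, and $H = \Z^k$. Since $H$ is abelian, conjugate elements coincide and $\CLF_H(n) \equiv 0$. It then remains to estimate $\rho_n$, the distortion pair $\delta_F^G$ and $\invdist_F^G$, the restricted conjugacy length function $\RCLF_F^G(n)$, and the twisted conjugacy length function $\TCLF_F$ on the automorphism family $A_G^{(n)}$. The hypothesis that $\varphi(\Z^k)$ lies in an $\R$-split torus provides the structural input: there is $P \in \GL[\R]{d}$ with $P^{-1}\varphi(t)P = \mathrm{diag}(\lambda_1(t),\ldots,\lambda_d(t))$ for every $t \in \Z^k$, and all subsequent estimates can be made in this diagonalising frame, eigenline by eigenline.

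I would then compute each ingredient in turn. Conjugation in $\Gamma$ acts on $\Z^d$ via $\varphi(\Z^k)$, so two vectors in $\Z^d$ are $\Gamma$-conjugate iff they lie in a common $\varphi(\Z^k)$-orbit; $\RCLF_F^G(n)$ is then the length of the shortest $t$ with $\varphi(t)(a_1) = a_2$, a logarithmic problem in diagonal coordinates, hence absorbed into the main bound. For $\TCLF_F$, twisted conjugacy by $\sigma = \varphi(t)$ requires $(\sigma - I)c = b - a$, a coordinate-wise division by $\lambda_i(t)-1$. When $k=1$ the set $A_G^{(n)}$ consists of at most $2n+1$ powers of a fixed matrix $M = \varphi(1)$, the spectral constants $|\lambda_i(t)-1|^{-1}$ are bounded uniformly in $n$ by a constant depending only on $M$, and $\TCLF_F(m;A_G^{(n)})$ is linear in $m$. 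When $k>1$ there exist short $t \in \Z^k$ with some $\lambda_i(t)$ much closer to $1$ (corresponding to $t$ near a rational wall of the torus); a careful quantitative bound on how close this can get yields $\TCLF_F(m;A_G^{(n)}) \leq C A_0^n m$ for some $A_0 > 1$.

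The centraliser offset $\rho_n$ is estimated by direct calculation: for $u = (a,t) \in \Gamma$, $\beta(Z_\Gamma(u))$ consists of those $s \in \Z^k$ with $(I-\varphi(s))a \in (I-\varphi(t))\Z^d$, and in diagonal coordinates this has finite index in $\Z^k$ with index polynomial in $|u|$, giving $\rho_n = O(n)$. The subgroup $\Z^d \leq \Gamma$ is at most exponentially distorted, so $\delta_F^G(n) \leq C\Lambda^n$ for an appropriate $\Lambda \geq 1$, while $\invdist_F^G$ behaves inversely and is at most logarithmic along the distorted directions (and linear along any eigendirection on which the torus acts trivially). Substituting into Theorem \ref{thmspecial:group ext}, the composed expression $\invdist_F^G(\TCLF_F(2\delta_F^G(n+\rho_n);A_G^{(n)}))$ collapses to a linear function of $n$ when $k=1$, because a logarithm of a linear-in-exponential quantity is linear, and to at most an exponential function when $k>1$, since the extra $A_0^n$ factor in $\TCLF_F$ survives the $\invdist_F^G$. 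Combined with $\rho_n = O(n)$ and the logarithmic $\RCLF_F^G(n)$, this yields the two stated upper bounds.

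The main obstacle will be the precise bookkeeping of this composed bound, most delicately in the case $k=1$, where one must verify that the potentially exponential distortion $\delta_F^G$ is absorbed exactly by the logarithmic $\invdist_F^G$. This rests on the observation that every $f \in \Z^d$ of short $\Gamma$-length admits a decomposition $f = \varphi(t)(b)$ with $|t|,|b| = O(n)$, and that the diagonal operator $(I - \varphi(s))^{-1}$ on the eigenspaces preserves this structure up to bounded multiplicative factors --- the same cancellation underlying Theorem \ref{thmspecial:ab-by-cyc clf}, extended here to the broader $\R$-split hypothesis and, for $k>1$, to actions by higher-rank free abelian groups.
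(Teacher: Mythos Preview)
Your overall strategy --- apply Theorem \ref{thmspecial:group ext} to the split extension $1 \to \Z^d \to \Gamma \to \Z^k \to 1$ and estimate each ingredient --- is exactly the paper's approach. The genuine gap is in your estimate of $\rho_n$ and, relatedly, in your account of where the exponential bound for $k>1$ comes from.

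You assert that $\beta(Z_\Gamma(u))$ has index in $\Z^k$ which is ``polynomial in $|u|$'' and conclude $\rho_n = O(n)$. This is not justified. For $u=(a,t)$, one has $s \in \beta(Z_\Gamma(u))$ if and only if $\varphi(s)\bar a = \bar a$ in $\Z^d / L$ where $L = (I-\varphi(t))\Z^d$; the index of $\beta(Z_\Gamma(u))$ is the size of the orbit of $\bar a$, which is only known to be bounded by $|\Z^d/L| = |\det(I-\varphi(t))| \le (1+\lambda^{\|t\|})^d$, an exponential quantity in $|u|$. The paper accordingly proves only $\rho_n \le d(1+\lambda^n)^d$ for $k>1$, and it is precisely this exponential $\rho_n$, fed through $\delta_F^G(n+\rho_n)$, that produces the exponential bound on $\CLF_\Gamma$.

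Your alternative explanation --- that the exponential for $k>1$ arises from a factor $A_0^n$ in $\TCLF_F$ which ``survives the $\invdist_F^G$'' --- is internally inconsistent. You yourself argue that $\invdist_F^G$ is logarithmic; but then
\[
\invdist_F^G\bigl(C A_0^{\,n}\,\delta_F^G(n+\rho_n)\bigr) \;\preceq\; \log\bigl(A_0^{\,n}\,\Lambda^{\,n+\rho_n}\bigr) \;=\; O(n+\rho_n),
\]
so the $A_0^n$ factor contributes only linearly after $\invdist_F^G$. With your claimed $\rho_n = O(n)$, your own estimates would therefore yield $\CLF_\Gamma(n) = O(n)$ for \emph{all} $k$, not an exponential bound when $k>1$. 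That would be a stronger result than the theorem --- indeed, whether the exponential bound for $k>1$ is sharp is left open in the paper --- but it rests entirely on the unproved (and likely false in general) polynomial bound for $\rho_n$.
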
 

This result has consequences for the conjugacy length function of fundamental groups of prime $3$--manifolds. Combining this result with results of Behrstock and Dru\c{t}u \cite{BD11} and Ji, Ogle and Ramsey \cite{JOR10} we get:

\begin{thmspecial}
Let $M$ be a prime $3$--manifold. Then $\CLF_{\pi_1(M)}(n)$ is bounded above by a quadratic function.
\end{thmspecial}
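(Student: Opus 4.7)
The plan is a case analysis based on the Geometrization Theorem. A prime $3$--manifold $M$ either carries one of Thurston's eight model geometries or admits a non-trivial JSJ decomposition along incompressible tori.

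First I would dispose of seven of the eight geometries by quoting existing results. For hyperbolic $M$, $\pi_1(M)$ is (relatively) word-hyperbolic, giving a linear $\CLF$ by \cite{BH99}. For $S^3$, $S^2\times\R$, and $E^3$ the fundamental group is virtually finite or virtually $\Z^n$ with $n \leq 3$, so $\CLF$ is at most linear. For Nil, $\hyp^2\times\R$, and $\widetilde{\SL[\R]{2}}$ the fundamental group is virtually a $2$-step nilpotent group or a (direct or central) extension of a surface group by $\Z$; in each case, combining the relevant hyperbolic or nilpotent factor with the results of Ji--Ogle--Ramsey~\cite{JOR10} and Behrstock--Dru\c{t}u~\cite{BD11} yields an at most quadratic upper bound on $\CLF$.

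The only model geometry not covered by prior work is Sol. There, $\pi_1(M)$ is virtually $\Z^2 \rtimes_A \Z$ with $A \in \SL{2}$ Anosov: its two real eigenvalues have absolute value different from $1$, so $A$ is diagonalisable over $\R$ and the cyclic subgroup it generates lies in an $\R$-split torus of $\SL[\R]{2}$. Theorem~\ref{thmspecial:SOL}(1) then applies to this finite-index subgroup and produces a linear $\CLF$, which transfers to the finite-index overgroup $\pi_1(M)$ by a routine argument, giving a linear, hence at most quadratic, bound.

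The remaining case is that $M$ is a graph manifold or a mixed manifold (non-trivial JSJ). This is precisely the setting in which Behrstock and Dru\c{t}u~\cite{BD11} show $\pi_1(M)$ is thick of order at most one with quadratic divergence, and deduce an at most quadratic upper bound on $\CLF$. Taking the maximum over these finitely many cases yields the stated quadratic bound. The main obstacle historically was the Sol case, which lies outside both the thickness and the relative hyperbolicity frameworks of \cite{BD11} and \cite{JOR10}; Theorem~\ref{thmspecial:SOL} is exactly what plugs this gap.
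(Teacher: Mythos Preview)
Your proposal is correct and follows essentially the same approach as the paper: a geometrization case analysis in which Behrstock--Dru\c{t}u \cite{BD11} handles the non-geometric case, Ji--Ogle--Ramsey \cite{JOR10} handles the Nil case, and Theorem~\ref{thmspecial:SOL}(1) with $d=2$, $k=1$ fills in the Sol case. The paper's argument is terser---it names only the non-geometric, Sol, and Nil cases explicitly---whereas you spell out all eight geometries; but the skeleton and the key inputs are identical.
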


We can also use Theorem \ref{thmspecial:SOL} to say something about conjugacy in Hilbert modular groups. Suppose $\Gamma = \SLOK$, where $\mathcal{O}_K$ is the ring of integers of a totally real field extension $K$ over $\Q$ of degree $d$. Then $\Gamma$ is a lattice in $\SL[\R]{2}^d$ and the intersection of $\Gamma$ with any minimal parabolic subgroup will be isomorphic to $\Z^d \rtimes_\varphi \Z^{d-1}$, one of the groups considered in Theorem \ref{thmspecial:SOL}. We deduce:

\begin{thmspecial}
Let $\Gamma$ be a Hilbert modular group. Then
\begin{enumerate}
\item  there exists a constant $K>0$ such that two elements $u,v\in \Gamma$ which are contained in the same minimal parabolic subgroup of the ambient Lie group are conjugate if and only if there exists a conjugator $\gamma \in \Gamma$ such that
		\[\modulus{\gamma} \leq K^{\modulus{u}+\modulus{v}};\]
\item there exists a constant $L>0$ such that two elements $u,v\in \Gamma$ which are contained in the same unipotent subgroup of the ambient Lie group are conjugate if and only if there exists a conjugator $\gamma \in \Gamma$ such that
		\[\modulus{\gamma} \leq L(\modulus{u}+\modulus{v}).\]
\end{enumerate}
\end{thmspecial}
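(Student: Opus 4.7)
The strategy is to reduce, for both parts, to Theorem~\ref{thmspecial:SOL} applied inside the cusp subgroup $\Gamma \cap P$, exploiting two structural features of Hilbert modular groups: the undistortion of $\Gamma \cap P$ in $\Gamma$, and the almost malnormality of its cusp subgroups. After conjugating we may take $P$ to be the product of the standard Borels in $\SL[\R]{2}^d$ and $N$ the product of upper unipotent subgroups. Dirichlet's unit theorem identifies $\Gamma \cap P$ with $\mathcal{O}_K \rtimes (\mathcal{O}_K^\times / \{\pm I\}) \cong \Z^d \rtimes_\varphi \Z^{d-1}$, in which a unit $\epsilon$ acts on $\mathcal{O}_K$ by multiplication; since $K$ is totally real the Galois conjugates of $\epsilon$ are all real, placing $\varphi(\Z^{d-1})$ in an $\R$-split torus of $\SL[\Z]{d}$, so Theorem~\ref{thmspecial:SOL} applies to $\Gamma \cap P$. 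The undistortion of $\Gamma \cap P$ in $\Gamma$ follows from the Lubotzky--Mozes--Raghunathan theorem applied to the ambient semisimple Lie group (of real rank $d \geq 2$), since the $\Gamma \cap P$-orbit in the symmetric space is a horosphere on which $\Gamma \cap P$ acts cocompactly.

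For part (1), suppose $u, v \in \Gamma \cap P$ are conjugated in $\Gamma$ by some $\gamma$. If $u = \pm I$ we take $\gamma$ trivial; otherwise $u$ has infinite order (the only torsion in $\Gamma \cap P$ being $\pm I$). Then $u$ lies in $(\Gamma \cap P) \cap \gamma^{-1}(\Gamma \cap P)\gamma$, so this intersection is infinite, and almost malnormality of maximal parabolic subgroups in arithmetic lattices of $\Q$-rank one forces $\gamma \in \Gamma \cap P$. Theorem~\ref{thmspecial:SOL}(2) then produces a conjugator in $\Gamma \cap P$ whose $\Gamma \cap P$-length is at most $A^{|u|_{\Gamma \cap P} + |v|_{\Gamma \cap P}}$, and undistortion converts this to a $\Gamma$-length bound of the form $K^{|u| + |v|}$.

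For part (2), when $u, v$ both lie in $\Gamma \cap N$ we obtain the sharper linear bound directly. The centralizer in $\SL[\R]{2}^d$ of any nontrivial element of $N$ already lies in $P$, so any $\gamma \in \Gamma$ conjugating $u$ to $v$ lies in $\Gamma \cap P$ and acts on the unipotent entries as multiplication by a single unit $\epsilon = b/a$, where $a, b \in \mathcal{O}_K$ are the entries of $u$ and $v$. Undistortion gives $|\sigma_i(a)|, |\sigma_i(b)| \leq e^{Cn}$ for every Archimedean embedding $\sigma_i$, while the integrality bound $|N(a)| \geq 1$ forces $|\sigma_i(a)| \geq e^{-(d-1)Cn}$, and similarly for $b$. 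Hence $\bigl|\log|\sigma_i(\epsilon)|\bigr| \leq dCn$ for every $i$, and since by Dirichlet the length of a unit in $\mathcal{O}_K^\times / \{\pm I\} \cong \Z^{d-1}$ is comparable to the maximum of these quantities, we conclude $|\gamma|_\Gamma \leq L(|u| + |v|)$.

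The main obstacle is cleanly justifying the two structural inputs --- the undistortion of $\Gamma \cap P$ in $\Gamma$ and the almost malnormality of its cusp subgroups --- since these require invoking the theory of arithmetic $\Q$-rank one lattices rather than following from the group-theoretic machinery developed earlier in the paper.
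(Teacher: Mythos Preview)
Your approach and the paper's coincide in their core: identify $\Gamma\cap P$ with $\Z^d\rtimes_\varphi\Z^{d-1}$ (via Dirichlet's unit theorem, with $\varphi(\Z^{d-1})$ landing in an $\R$--split torus because $K$ is totally real) and then invoke Theorem~\ref{thmspecial:SOL}. The paper's argument is extremely terse --- it simply cites Theorem~\ref{thm:generalised SOL} for part~(1), and for part~(2) points to the linear bound on $\RCLF_{\Z^d}^{\Z^d\rtimes\Z^k}$ obtained in Step~1 of the proof of Theorem~\ref{thm:generalised SOL}. Your direct computation with the Archimedean embeddings $\sigma_i$ and the integrality bound $|N(a)|\geq 1$ is exactly a concrete rendering of that Step~1 argument (Cramer's rule plus Lemma~\ref{lem:log component is bounded}) specialised to the Hilbert modular setting, so the two routes for part~(2) are really the same calculation in different clothing.

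Where you genuinely go beyond the paper is in supplying the bridge between $\Gamma$ and $\Gamma\cap P$: you argue almost malnormality (or, for unipotents, the unique–fixed–point argument on the boundary) to force any conjugator into $\Gamma\cap P$, and you invoke undistortion of $\Gamma\cap P$ in $\Gamma$ to transfer word lengths. The paper does not make either step explicit; it uses the finiteness of cusps to get a uniform constant over all parabolic subgroups but otherwise treats the passage between $\Gamma$ and $\Gamma\cap P$ as understood. Your additions are legitimate and needed for the statement as written. One small caveat: your justification of undistortion via ``the $\Gamma\cap P$--orbit is a horosphere on which it acts cocompactly'' plus Lubotzky--Mozes--Raghunathan is a little delicate as phrased (the relevant orbit is a codimension--one level set of the Busemann function at the cusp, and one still has to check its undistortion in $(\mathbb{H}^2)^d$); a cleaner route is that $\Q$--rank one lattices are hyperbolic relative to their cusp subgroups, and peripheral subgroups in relatively hyperbolic groups are both almost malnormal and undistorted --- this handles both of your structural inputs at once.
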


We begin in Section \ref{sec:distortion} by discussing subgroup distortion. In Section \ref{sec:tcl and rcl} we introduce the twisted and restricted conjugacy length functions, ingredients necessary for Theorem \ref{thmspecial:group ext}, which is the main result of Section \ref{sec:group ext}, concerning the conjugacy length function of group extensions. Theorem \ref{thmspecial:group ext} is applied to abelian-by-cyclic groups in Section \ref{sec:abelian-by-cyclic} and to semidirect products $\Z^d \rtimes \Z^k$ in Section \ref{sec:Z^d rtimes Z^k}.

\subsubsection*{Acknowledgements}The author would like to thank Cornelia Dru\c{t}u for many valuable discussions on material in this paper. Discussions with Romain Tessera were also helpful.

\section{Subgroup distortion}\label{sec:distortion}

Suppose that $G$ is finitely generated with $\modulus{.}$ denoting the word length in $G$ with respect to some finite generating set. It will not always be the case that $F$ is finitely generated. Suppose that $d_F$ is any left-invariant metric on $F$. For example, we may take $d_F$ to be the restriction to $F$ of the word metric on $G$, or, if $F$ is finitely generated, we may take it to be the word metric on $F$ with respect to some finite generating set for it. We will denote by $\modulus{x}_F$ the distance $d_F(e_F,x)$. Let $\delta_F^G:\N\to \N$ be the subgroup distortion function for $(F,d_F)$ in $G$, defined by 
		\[\delta_F^G(n)=\max \{ \modulus{f}_F : f \in F, \modulus{f}\leq n\}.\]
It is well known that the subgroup distortion is independent of generating sets chosen, up to the following equivalence of functions:
given $f,g : \N \to \N$ we will write $f \preceq g$ if there exists a positive constant $C$ such that $f(n) \leq Cg(Cn)$. If $f \preceq g$ and $g \preceq f$ then we will write $f \asymp g$.

We will also need a function which measures the lower bound of the subgroup distortion in $F$. Namely we define a function $\ldist_F^G:\N \to \N$ by
		\[\ldist^G_F (n)=\min \{ \modulus{f}_F : f \in F, \modulus{f}\geq n\}.\]
This satisfies the following relationship for every $a \in F$:
		\begin{equation}\label{eq:distortion}
		\ldist_F^G(\modulus{a}) \leq \modulus{a}_F \leq \delta_F^G(\modulus{a}).
		\end{equation}
Define
\begin{eqnarray*}
\invdist_F^G(n) & := & \min \{k \in \N : \ldist_F^G(k) \geq n \}\\ & = & \min\{k \in \N : \modulus{f}_F \geq n \ \forall f \in F \ \textrm{such that} \ \modulus{f}\geq k \}
\end{eqnarray*}
This is roughly speaking an inverse subgroup distortion function, in the sense that if, for $a \in F$, we know $\modulus{a}$, then $\delta_F^G(\modulus{a})$ gives an upper bound for $\modulus{a}_F$; meanwhile:

\begin{lemma}\label{lem:invdist is inverse}
Let $a \in F$. Then $\modulus{a} \leq \invdist_F^G(\modulus{a}_F).$
\end{lemma}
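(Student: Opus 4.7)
My plan is to unwind the definition of $\invdist_F^G$ and exploit its inverse relationship with $\ldist_F^G$. Combining the identity $\invdist_F^G(n) = \min\{k \in \N : \ldist_F^G(k) \geq n\}$ with the fact that $\ldist_F^G$ is non-decreasing in its argument, one obtains the elementary equivalence
\[ \invdist_F^G(n) \geq m \quad\Longleftrightarrow\quad \ldist_F^G(m-1) < n \qquad (m \geq 1). \]
After disposing of the trivial case $\modulus{a} = 0$, specialising to $n = \modulus{a}_F$ and $m = \modulus{a}$ reduces the lemma to proving the strict inequality $\ldist_F^G(\modulus{a} - 1) < \modulus{a}_F$.

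The plan for this strict inequality is to exhibit a witness $b \in F$ with $\modulus{b}_F < \modulus{a}_F$ but $\modulus{b} \geq \modulus{a} - 1$, since the existence of such a $b$ immediately forces $\ldist_F^G(\modulus{a} - 1) \leq \modulus{b}_F < \modulus{a}_F$. The natural choice is a truncation of a $d_F$-geodesic for $a$: writing $a = s_1 \cdots s_n$ with $n = \modulus{a}_F$ in the generating set defining $d_F$, I would take $b := s_1 \cdots s_{n-1}$. Then $\modulus{b}_F \leq n - 1$ is automatic, and the triangle inequality in $G$ gives $\modulus{b} \geq \modulus{a} - \modulus{s_n}$, which is at least $\modulus{a} - 1$ under the standard convention that the generators of $F$ are taken from the chosen generating set of $G$.

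The one point requiring care is the case where $F$ is not finitely generated and $d_F$ is only assumed to be a left-invariant metric, so that the literal notion of generators and geodesic word is unavailable. The same argument adapts by replacing the geodesic word with a chain $e_F = f_0, f_1, \ldots, f_n = a$ with $d_F(f_{i-1}, f_i) = 1$ and setting $b = f_{n-1}$; provided $d_F$ is normalised compatibly with $d_G$, so that a unit step in $d_F$ costs at most one in $d_G$, the inequality $\modulus{b} \geq \modulus{a} - 1$ survives. I do not expect a serious obstacle beyond this bookkeeping.
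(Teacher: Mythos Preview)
Your approach is correct but substantially more elaborate than the paper's two-sentence proof. The paper simply invokes inequality~\eqref{eq:distortion}: since $a$ itself lies in $\{f\in F:\modulus{f}\ge\modulus{a}\}$, one has $\ldist_F^G(\modulus{a})\le\modulus{a}_F$ immediately; the conclusion $\modulus{a}\le\invdist_F^G(\modulus{a}_F)$ is then read off from the definition of $\invdist_F^G(\modulus{a}_F)$ together with the monotonicity of $\ldist_F^G$. No auxiliary element $b$, no geodesic word in $F$, and no strict inequality at $\modulus{a}-1$ appear.

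The extra hypotheses you flag---that $d_F$ be a word metric whose generators lie in the generating set of $G$, or more generally that a unit $d_F$-step costs at most one in $d_G$---arise precisely because you opted to prove the \emph{strict} inequality $\ldist_F^G(\modulus{a}-1)<\modulus{a}_F$, which demands a witness $b$ that is one step shorter than $a$ in $d_F$ yet at most one step shorter in $d_G$. The paper works instead with the non-strict inequality at $\modulus{a}$, for which $a$ is its own witness, and so never needs to couple the two metrics. What you describe as residual bookkeeping is thus an artefact of the sharper intermediate target you set yourself rather than something the lemma forces.
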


\begin{proof}
We know by \eqref{eq:distortion} that $\ldist_F^G(\modulus{a}) \leq \modulus{a}_F$. Therefore, by the definition of $\invdist_F^G(\modulus{a}_F)$, and since $\ldist_F^G$ is non-decreasing, $\modulus{a} \leq \invdist_F^G(\modulus{a}_F)$.
\end{proof}

The following says that $\invdist_F^G$ is the minimal function satisfying Lemma \ref{lem:invdist is inverse}.

\begin{lemma}\label{lem:invdist is minimal}
Suppose an non-decreasing function $\tau:\N \to \N$ satisfies $\modulus{a} \leq \tau(\modulus{a}_F)$ for every $a \in F$. Then $\invdist_F^G(n) \leq \tau(n)$ for every $n \in \N$.
\end{lemma}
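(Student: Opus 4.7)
The plan is to verify directly that $k = \tau(n)$ lies in the defining set
\[
\{k \in \N : \ldist_F^G(k) \geq n\}
\]
whose minimum defines $\invdist_F^G(n)$; once that is done, $\invdist_F^G(n) \leq \tau(n)$ follows immediately from the minimality of the defining infimum. So the concrete goal becomes: show $\ldist_F^G(\tau(n)) \geq n$.

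Using the second, more concrete, description of $\invdist_F^G$ given in the paper, this amounts to verifying that every $f \in F$ with $\modulus{f} \geq \tau(n)$ also satisfies $\modulus{f}_F \geq n$. I would fix such an $f$ and invoke the hypothesis on $\tau$ at $a = f$, which gives
\[
\tau(\modulus{f}_F) \;\geq\; \modulus{f} \;\geq\; \tau(n).
\]
Monotonicity of $\tau$ on $\N$ should then deliver $\modulus{f}_F \geq n$, completing the argument.

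The main (and rather mild) obstacle is the last inference: passing from $\tau(\modulus{f}_F) \geq \tau(n)$ to $\modulus{f}_F \geq n$ is automatic when $\tau$ is strictly increasing, but $\tau$ is assumed only non-decreasing. I would handle this by a short case analysis: if $\modulus{f}_F \leq n-1$, then the chain $\tau(n) \leq \tau(\modulus{f}_F) \leq \tau(n-1) \leq \tau(n)$ collapses to equalities, and one can then rule out (or else dispatch) this degenerate configuration by a direct comparison with the definitions of $\ldist_F^G$ and $\invdist_F^G$, using the integer-valuedness of $\modulus{f}$, $\modulus{f}_F$, and $\tau$. Apart from this bookkeeping, the proof is essentially just an unwinding of the definitions.
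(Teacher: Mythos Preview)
Your approach is exactly the paper's: show that $k=\tau(n)$ lies in the set defining $\invdist_F^G(n)$ by taking $f\in F$ with $\modulus{f}\ge\tau(n)$, deducing $\tau(\modulus{f}_F)\ge\modulus{f}\ge\tau(n)$, and then concluding $\modulus{f}_F\ge n$. The paper's proof is a single sentence and dispatches this last step simply by writing ``hence $\modulus{a}_F\ge n$ since $\tau$ is non-decreasing,'' with no case analysis whatsoever.

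The worry you flag is genuine, however, and your proposed fix does not actually close it. For a merely non-decreasing $\tau$ the implication $\tau(m)\ge\tau(n)\Rightarrow m\ge n$ can fail, and the ``degenerate configuration'' you describe cannot in general be ruled out by staring at the definitions or by integer-valuedness: if $\tau$ is constant on $\{n-1,n\}$ with common value $c$ and some $f\in F$ has $\modulus{f}=c$ and $\modulus{f}_F=n-1$, then this $f$ witnesses $\ldist_F^G(\tau(n))\le n-1<n$, so $\tau(n)$ genuinely fails to lie in the defining set. Thus both your argument and the paper's share the same small gap in the stated generality. The clean repair is to strengthen the hypothesis to ``$\tau$ strictly increasing'' --- harmless, since every application in the paper uses such a $\tau$, and in any case one can replace $\tau(n)$ by $\tau(n)+n$ without loss --- after which your argument (and the paper's) goes through verbatim with no case analysis needed.
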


\begin{proof}
Suppose $a \in F$ and $\modulus{a}\geq \tau(n)$. Then $\tau(\modulus{a}_F)\geq \tau(n)$, hence $\modulus{a}_F \geq n$ since $\tau$ is non-decreasing.
\end{proof}

For emphasis, we repeat the remark that the functions $\delta_F^G$, $\ldist_F^G$ and $\invdist_F^G$ will depend on the metric $d_F$ chosen for $F$. For example, we may take $d_F$ to be given by the word metric on $G$ restricted to $F \times F$. In which case, all three functions will grow linearly.

\begin{example}[Solvable Baumslag--Solitar groups]
\label{example:BS distortion}
Here we consider an example where $\invdist_F^G$ is the inverse of $\delta_F^G$, up the relation $\asymp$ defined above. In general this may not be the case.

For each $m \in \N$, consider the solvable Baumslag--Solitar groups $\BS{m}$given by the presentation $\langle a , b \mid bab^{-1}=a^m\rangle$. This is the semidirect product $\Z[\frac{1}{m}] \rtimes_m \Z$ where the action of $\Z$ on $\Z[\frac{1}{m}]$ is by multiplication by $m$. The subgroup $\Z[\frac{1}{m}]$ corresponds to the subgroup generated by elements $b^{-r}ab^r$, for non-negative integers $r$. We will consider here just the subgroup generated by $a$ and use the word metric given by the generating set $\{ a \}$. One can see that this is exponentially distorted since $a^{m^n}=b^n a b^{-n}$ for any $n \in \N$. To be precise, this relation implies for each $n \in \N$:
		$$\delta_{\langle a \rangle}^{\BS{m}}(2n+1)\geq m^n.$$
In fact, one can show for all $r \in \Z$ (see \cite[Lemma 2.3.3]{SaleThesis}):
\begin{equation}\label{eqn:BS distortion}
\frac{1}{2}\log_m\modulus{r} \leq \modulus{a^r} \leq (m+2)\log_m\modulus{r} + \frac{m}{2}+1.
\end{equation}
Hence we may put $\tau(n)=(m+2)\log_m(n) +\frac{m}{2}+1$ and apply Lemma \ref{lem:invdist is minimal} to get that
		$$\invdist_{\langle a \rangle}^{\BS{m}}(n) \leq (m+2)\log_m(n) +\frac{m}{2}+1.$$
\end{example}

\section{Twisted and restricted conjugacy length functions}\label{sec:tcl and rcl}

In the following, suppose that $\Gamma$ is a group which admits a left-invariant metric $d_\Gamma$. For $\gamma \in \Gamma$, denote by $\modulus{\gamma}_\Gamma$ the distance $d_\Gamma(1,\gamma)$. We will usually omit the subscript in $\modulus{.}_\Gamma$ when we discuss lengths in $\Gamma$, favouring the subscript notation when dealing with subgroups of $\Gamma$.

\subsection*{The twisted conjugacy length function}

We first recall the \emph{twisted conjugacy problem} in a group $\Gamma$. For an automorphism $\varphi$ of $\Gamma$ we say two elements $u,v \in \Gamma$ are $\varphi$--twisted conjugate if there exists $\gamma \in \Gamma$ such that $u = \gamma v \varphi(\gamma)^{-1}$. In such cases we denote this relationship by $u \sim_\varphi v$. The twisted conjugacy problem asks whether there is an algorithm which, on input an automorphism $\varphi$ and two elements $u$ and $v$, determines whether $u \sim_\varphi v$.

Suppose we are given two elements $u$ and $v$ that are $\varphi$--twisted conjugate. We can ask what can be said about the length of the shortest $\gamma$ such that $u=\gamma v \varphi(\gamma)^{-1}$. In particular, we can look for a function $f:\R_+ \to \R_+$ such that whenever $\modulus{u}+\modulus{v}\leq x$, for $x \in \R_+$, then $u \sim_\varphi v$ if and only if there exists $\gamma$ such that $\modulus{\gamma} \leq f(x)$ which satisfies $u=\gamma v \varphi(\gamma)^{-1}$. We call the minimal such function the \emph{$\varphi$--twisted conjugacy length function} and denote it by $\TCLF_\Gamma(n;\varphi)$. Observe that $\CLF_\Gamma(n)=\TCLF_\Gamma(n;\Id)$. We can extend this notation to subsets $A \subseteq \Aut(\Gamma)$, by defining $\TCLF_\Gamma(n;A)=\sup \{ \TCLF_\Gamma(n,\varphi) : \varphi \in A\}$. The \emph{twisted conjugacy length function} of $\Gamma$ is $\TCLF_\Gamma(n)=\TCLF_\Gamma(n;\Aut(\Gamma))$.

\subsection*{The restricted conjugacy length function}

Given a subgroup $B$ of a group $\Gamma$, the \emph{restricted conjugacy problem} of $\Gamma$ to $B$ asks if there is an algorithm which determines when two elements $a,b \in B$ are conjugate in $\Gamma$ (see \cite{BMV10}).

We can associate to the restricted conjugacy length problem a corresponding function, $\RCLF_B^\Gamma:\R_+ \to \R_+$, called the \emph{restricted conjugacy length function} of $B$ from $\Gamma$. It is defined to be the minimal function satisfying the property that whenever $\modulus{a}+\modulus{b}\leq x$, for $a,b \in B$ and $x \in \R_+$, then $a$ is conjugate to $b$ in $\Gamma$ if and only if there exists a conjugator $\gamma \in \Gamma$ for which $\modulus{\gamma} \leq \RCLF_B^\Gamma(x)$.

Note that in the definition of the restricted conjugacy length function we always consider the length of the involved players as elements of $\Gamma$, rather than using a metric $d_B$ on $B$. This naturally leads us to a lower bound for the conjugacy length function of $\Gamma$:
		\[\RCLF_B^\Gamma\leq \CLF_\Gamma.\]
In fact we need not even assume that $B$ is a subgroup to define the restricted conjugacy problem of $B$ from $\Gamma$ and hence $\RCLF_B^\Gamma$. In order for the lower bound above to be useful though, we would need $B$ to be unbounded in $d_\Gamma$.

\begin{example}[Twisted conjugacy length in free abelian groups] \label{example:TCL for abelian}
Let $\Gamma = \Z^r$ for some positive integer $r$. Let $u,v \in \Z^r$ and $\varphi \in \SL{r}$ be diagonalisable with all eigenvalues real and positive. We wish to find some control on the size of the shortest element $\gamma \in \Z^r$ satisfying 
\begin{equation}
\label{eq:abelian}u + \varphi(\gamma) = \gamma + v.
\end{equation}
Suppose $\varphi$ has an eigenvalue equal to $1$ with corresponding eigenspace $E_1$. Let $V$ be the sum of the remaining eigenspaces, so \mbox{$\R^n=E_1 \oplus V$.} With respect to this decomposition, write 
\[\gamma=\gamma_1 + \gamma_2, \ u=u_1+u_2, \ v=v_1+v_2\]
where $\gamma_1,u_1,v_1 \in E_1$ and $\gamma_2,u_2,v_2 \in V$. Equation (\ref{eq:abelian}) tells us that $u_1=v_1$ and $u_2+\varphi'(\gamma_2)=\gamma_2 + v_2$, where $\varphi'$ is a matrix which corresponds to the action of $\varphi$ on $V$ and hence has no eigenvalues equal to $1$. We may therefore take $\gamma_1=0$ and hence assume that $\varphi$ has no eigenvalues equal to $1$.

Rewrite equation (\ref{eq:abelian}) as $(\Id - \varphi)\gamma=u-v$. Since $1$ is not an eigenvalue of $\varphi$, we notice that $\gamma=(\Id - \varphi)^{-1}(u-v)$. Hence
		\[\norm{\gamma} \leq (1+\norm{\varphi}) (\norm{u}+\norm{v}).\]
Therefore, if $\lambda$ is the largest absolute value of an eigenvalue of $\varphi$, then \[\TCLF_{\Z^r}(n;\varphi) \leq (1+\lambda)n.\]
\end{example}

\begin{example}[Restricted conjugacy length in solvable Baumslag--Solitar groups]\label{example:RCL for Baumslag Solitar}
As in Example \ref{example:BS distortion}, let $\BS{m}=\langle a , b \mid bab^{-1}=a^m\rangle$. Suppose $a^r$ is conjugate to $a^s$ in $\BS{m}$, where $r$ and $s$ may be taken to be non-zero. Every element in $\BS{m}$ can be written uniquely in the normal form $b^{-j} a^l b^k$, for some $j,k,l \in \Z$ with $j,k \geq 0$ and, if $j,k$ are both non-zero, then $l$ is not divisible by $m$. Write a conjugator for $a^r$ and $a^s$ in this way. Then $a^r b^{-j} a^l b^k = b^{-j} a^l b^k a^s$, which leads to $b^{-j} a^{rm^j+l}b^k=b^{-j} a^{l+sm^k}b^k$. Note that both sides of this equation are in normal form, since $rm^j+l$ and $l+sm^k$ are divisible by $m$ if and only if $l$ is as well. So $b^{-j} a^l b^k$ is a conjugator if and only if $j,k$ and $l$ satisfy:
		\[rm^j=sm^k.\]
Then we may take $l=0$ and we also have $k-j=\log_m\modulus{r}-\log_m\modulus{s}$. Since $r$ and $s$ are non-zero integers both $\log_m\modulus{r}$ and $\log_m\modulus{s}$ are non-negative. In equation \eqref{eqn:BS distortion} above we noted that $\log_m\modulus{r} \leq 2\modulus{a^r}$, so 
		\[\modulus{b^{k-j}} \leq \modulus{k-j} \leq 2\max\{\modulus{a^r},\modulus{a^s}\}\leq 2(\modulus{a^r}+\modulus{a^s}).\]
This leads to the restricted conjugacy length function
		\[\frac{n-2}{2} \leq \RCLF_{\langle a \rangle}^{\BS{m}}(n) \leq{2n}\]
where the lower bound follows from looking at the conjugate elements $a^{m^r}$ and $a$ and noting that the shortest conjugator for them is $b^r$.
\end{example}

\section{Conjugacy length in group extensions}\label{sec:group ext}

A solution to the conjugacy problem in certain group extensions is given by Bogopolski, Martino and Ventura \cite{BMV10}. Given a short exact sequence
\begin{equation}
\label{eq:ses} 1 \longrightarrow F \overset{\alpha}{\longrightarrow} G \overset{\beta}{\longrightarrow} H \longrightarrow 1
\end{equation}
they show that, under certain conditions, the solubility of the conjugacy problem in $G$ is equivalent to the subgroup $A_G = \{\varphi_g \mid \varphi_g(x)=g^{-1}\alpha(x)g,x\in F, g \in G\}$ of $\Aut(F)$ having solvable orbit problem (that is to say, there is an algorithm which decides whether for any element $u \in F$ there is some $\varphi \in A_G$ such that $u=\varphi(u)$). The conditions that must apply to the short exact sequence are the following:
\begin{enumerate}[label=({\alph*})]
\item\label{item:H solvable CP} $H$ has solvable conjugacy problem;
\item\label{item:F solvable TCP} $F$ has solvable twisted conjugacy problem;
\item\label{item:centraliser condition} for every non-trivial $h \in H$, the subgroup $\langle h \rangle$ has finite index in the centraliser $Z_H(h)$, and one can algorithmically produce a set of coset representatives.
      \newcounter{enumii_saved}
      \setcounter{enumii_saved}{\value{enumii}}
\end{enumerate}
Condition \ref{item:centraliser condition} is rather restrictive. In particular it implies that centralisers in $H$ need to be virtually cyclic. The types of groups which this includes are typically extensions where $H$ is a finitely generated hyperbolic group.

To study conjugacy length in a group extension, it seems natural therefore that we should require an understanding of the conjugacy length in $H$ and the twisted conjugacy length in $F$. We should also expect the restricted conjugacy length function of $G$ to $F$ to make an appearance and there should be some condition based upon the centralisers of elements in $H$.

We will identify $F$ with its image under $\alpha$.  
Let $\modulus{gF}_H=\min\{\modulus{gf} : f \in F\}$ be the quotient metric on $H$. In the following, the twisted conjugacy length function for $F$ is taken with respect to a metric $d_F$ on $F$, and for $f \in F$ we denote $\modulus{f}_F:=d_F(e,f)$.

\begin{thm}\label{thm:group extensions}
Let $G$ be given by the short exact sequence (\ref{eq:ses}). Suppose that it satisfies the following condition:
\begin{enumerate} [label=(c$^\prime$\hspace{-0.3mm})]
\item \label{item:centraliser domain} 
the function $\rho(u):=\max \{ \modulus{h\beta(Z_G(u))}_H : h \in Z_H(\beta(u))\}$ takes finite values for all $u \in G$.
\end{enumerate}
Then: 
		\[\CLF_G(n) \leq \max \left\{  \RCLF_F^G(n)   , \CLF_H(n) +  \rho_n + \invdist_F^G \left( \TCLF_F\left(2\delta_F^G(n+\rho_n);A_G^{(n)}\right)\right)\right\}\]
where $\rho_n = \max\{\rho(u)\mid u \in G,  \ \modulus{u}\leq n\}$ and $A_G^{(n)}=\{\varphi_u \in A_G \mid u \in G, \ \modulus{u}\leq n \}$.
\end{thm}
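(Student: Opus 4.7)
The plan is to split into two cases, according to whether $u \in F$ (equivalently $\beta(u)=e_H$).

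\emph{Case 1: $\beta(u)=e_H$.} Since $F\triangleleft G$ and $v$ is $G$-conjugate to $u$, we have $v \in F$ as well. By the definition of $\RCLF_F^G$, there is $\gamma\in G$ with $\gamma u\gamma^{-1}=v$ and $\modulus{\gamma}\le \RCLF_F^G(\modulus{u}+\modulus{v})\le \RCLF_F^G(n)$, which is dominated by the right-hand side.

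\emph{Case 2: $\beta(u)\neq e_H$; projection and centraliser correction.} In $H$, $\beta(u)$ and $\beta(v)$ are conjugate with summed length at most $n$, so $\CLF_H$ supplies $\bar h_0\in H$ conjugating them with $\modulus{\bar h_0}_H\le \CLF_H(n)$. Any $G$-conjugator $\gamma_0$ of $u,v$ then has $\beta(\gamma_0)=\bar h_0 z$ for some $z\in Z_H(\beta(u))$; right-multiplying $\gamma_0$ by $w\in Z_G(u)$ leaves it a conjugator and changes its projection by $\beta(w)$. Condition (c$'$) applied to $z$ produces $w\in Z_G(u)$ with $\modulus{z\beta(w)}_H\le \rho(u)\le \rho_n$, so that $\gamma_1:=\gamma_0 w$ is a conjugator with $\modulus{\beta(\gamma_1)}_H\le \CLF_H(n)+\rho_n$. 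Choose a shortest $G$-lift $g_0$ of $\beta(\gamma_1)$, giving $\modulus{g_0}\le \CLF_H(n)+\rho_n$, and look for a conjugator of the form $\gamma=fg_0$ with $f\in F$.

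\emph{Reduction to twisted conjugacy in $F$.} Setting $u_0:=g_0 u g_0^{-1}$, the equation $\gamma u\gamma^{-1}=v$ becomes $f u_0 f^{-1}=v$, and since $\beta(u_0)=\beta(v)$ the element $f_0:=u_0 v^{-1}$ lies in $F$. A direct rearrangement gives $f_0=\eta\psi(\eta)^{-1}$ with $\eta=f^{-1}$ and $\psi=\varphi_{v^{-1}}$ the automorphism of $F$ defined by $x\mapsto vxv^{-1}$; note that $\psi\in A_G^{(n)}$ because $\modulus{v^{-1}}=\modulus{v}\le n$. Thus $f_0\sim_\psi e$ in $F$, so the twisted conjugacy length function provides $\eta\in F$ realising this relation whose $F$-length is bounded by $\TCLF_F(\,\cdot\,;A_G^{(n)})$ evaluated at an estimate for $\modulus{f_0}_F$. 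Converting the $F$-bound on $f=\eta^{-1}$ back to a $G$-bound via Lemma~\ref{lem:invdist is inverse} yields $\modulus{f}\le \invdist_F^G\bigl(\TCLF_F(2\delta_F^G(n+\rho_n);A_G^{(n)})\bigr)$, and adding $\modulus{g_0}\le \CLF_H(n)+\rho_n$ delivers the second term of the max.

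\emph{Main obstacle.} The delicate step is bounding the argument to $\TCLF_F$ by $2\delta_F^G(n+\rho_n)$: a naive triangle-inequality estimate for $\modulus{u_0 v^{-1}}_G$ introduces a spurious $2\modulus{g_0}$, and massaging this into the advertised form requires a careful choice of which representatives are paired in the twisted conjugacy reformulation, together with care in how the input sum $\modulus{a}_F+\modulus{b}_F$ to $\TCLF_F$ is converted from $G$-length data via $\delta_F^G$.
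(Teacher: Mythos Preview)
Your strategy matches the paper's, and you have correctly put your finger on the crux: with your setup, the element $f_0 = g_0 u g_0^{-1}v^{-1}$ fed into $\TCLF_F$ has $G$-length bounded only by $n + 2\modulus{g_0}$, and since $\modulus{g_0}\le \CLF_H(n)+\rho_n$ this yields $\delta_F^G(n+2\CLF_H(n)+2\rho_n)$ rather than the advertised $2\delta_F^G(n+\rho_n)$. You flag this as the obstacle but do not resolve it; in fact the single-element relation $f_0\sim_\psi e$ that you set up \emph{cannot} be massaged to avoid it, because the full length of $g_0$ sits irreducibly inside $f_0$.

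The paper's device is to keep the $\CLF_H$-step and the $\rho$-step separate, and to recast the twisted conjugacy as a relation between \emph{two} nontrivial elements of $F$. First (its Case~3) one conjugates $v$ by $g_0$ with $\modulus{g_0}\le \CLF_H(n)$ to reach $v_0$ with $\beta(u)=\beta(v_0)$; this $\CLF_H(n)$ is paid only as an additive term at the very end and never enters $\delta_F^G$. Then (its Case~2), rather than lifting a single element of $H$ of length $\le\CLF_H(n)+\rho_n$, one takes a coset representative $h$ with $\modulus{h}\le \rho(u)$ and sets
\[
f := u^{-1}v_0,\qquad f_h := u^{-1}h^{-1}uh,
\]
and seeks $a\in F$ satisfying the $\varphi_u$-twisted relation $f = \varphi_u(a)^{-1}\, f_h\, a$. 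The point is that $f_h$ depends only on $u$ and the short $h$, so $\modulus{f_h}\le 2\modulus{u}+2\modulus{h}\le n + 2\rho(u)$ with no $g_0$ contamination; and $\modulus{f}\le \modulus{u}+\modulus{v_0}$. This pairing---automorphism $\varphi_u$ in place of your $\varphi_{v^{-1}}$, and a two-sided relation $f\sim_{\varphi_u} f_h$ in place of your $f_0\sim_\psi e$---is exactly the ``careful choice of which representatives are paired'' that you gesture at but do not supply. Providing it is what your argument is missing.
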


\begin{proof}
We split the proof into various cases, according to the relationship between $\beta(u)$ and $\beta(v)$, beginning with the easiest case. Throughout we will make the assumption that $\modulus{u} \leq \modulus{v}$.

\vspace{2mm}
\noindent\underline{\textsc{Case 1:}} $\beta(u)=\beta(v)=e_H$.
\vspace{2mm}

\noindent In this case $u$ and $v$ lie in the image of $\alpha$. We therefore find a conjugator $x \in G$ such that $v = x^{-1}u x$ and $\modulus{x} \leq \RCLF_F^G(\modulus{u}+\modulus{v})$.

\vspace{2mm}
\noindent\underline{\textsc{Case 2:}} $\beta(u)=\beta(v)\neq e_H$.
\vspace{2mm}

\noindent We need to reduce this case to the twisted conjugacy problem in $F$. First though we will determine a coset of $F$ in $G$ in which we will find a conjugator whose length we can estimate. 

Let $\mathcal{H}$ be a set of left-coset representatives of $F$ in $G$ satisfying $\modulus{h} = \modulus{\beta(h)}_H$ for each $h \in \mathcal{H}$. Let $g$ be any conjugator for $u$ and $v$. The following subset of $\mathcal{H}$ gives the representatives for those cosets which contain a conjugator for $u$ and $v$:
		\[\mathcal{H}_{u,v} = \{ h \in \mathcal{H} \mid \exists f \in F \textrm{ such that } h f \in Z_G(u)g \}.\]
Note that the image under $\beta$ of $\mathcal{H}_{u,v}$ will be precisely the image of $Z_G(u)g$: to say $h$ is in $\mathcal{H}_{u,v}$ is equivalent to saying there exists some $f \in F$ such that $hf \in Z_G(u)g$ and since $\beta(h)=\beta(hf)$ for all $f \in F$ we see that $\beta(\mathcal{H}_{u,v}) = \beta(Z_G(u)g)$. 

Choose $h \in \mathcal{H}_{u,v}$ with $\beta(h)$ of minimal size --- we are choosing the coset of $F$ which is closest to the identity among those cosets containing a conjugator. Since $\beta(u)=\beta(v)$ we deduce that $\beta(Z_G(u)g) \subseteq Z_H(\beta(u))$. Hence we may apply condition \ref{item:centraliser domain} and assume $\modulus{h} \leq \rho(u)$.

Since $\beta(h) \in Z_H(\beta(u))$, it follows that $h^{-1}uh=uf_h$ for some $f_h \in F$. Also $\beta(u)=\beta(v)$ implies $u^{-1}v=f \in F$. Let $a \in F$ satisfy the twisted conjugacy relation
\begin{equation}
\label{eq:thm:CLF of extension:twisted equation} 		f=\varphi_u(a)^{-1} f_h a.
\end{equation}
We will first show that $ha$ is a conjugator for $u$ and $v$ and then show we have a control on its size. By unscrambling equation (\ref{eq:thm:CLF of extension:twisted equation}) we obtain the following:
\begin{eqnarray*}
u^{-1}v = f & = & u^{-1}a^{-1}uf_ha \\
& = & u^{-1}a^{-1}h^{-1}uha.
\end{eqnarray*}
Hence $v=(ha)^{-1}u(ha)$ as required.

The size of $a$ is controlled by the twisted conjugacy length function of $F$: 
		\[\modulus{a}_F \leq \TCLF_F(\modulus{f}_F+\modulus{f_h}_F;\varphi_u).\]
Applying the distortion function gives us $\modulus{f}_F \leq \delta_F^G(n)$. Meanwhile $f_h = [u,h]$, so $\modulus{f_h}_F \leq \delta_F^G(2\modulus{u}+2\modulus{h}) \leq \delta_F^G(n+\rho(u))$, since we made the assumption that $\modulus{u}\leq \modulus{v}$. In summary, we have found a conjugator $ha$ satisfying
		\[\modulus{ha} \leq  \rho (u) + \invdist_F^G\Big(\TCLF_F\big(2\delta_F^G(n+\rho(u));\varphi_u\big)\Big)\]
where $\invdist_F^G$ is the inverse subgroup distortion function defined immediately above the statement of the theorem.

\vspace{2mm}
\noindent\underline{\textsc{Case 3:}} $\beta(u)\neq \beta(v)$.
\vspace{2mm}

\noindent Let $u,v$ be conjugate elements in $G$. Then in particular $\beta(u)$ is conjugate to $\beta(v)$ in $H$. Apply the conjugacy length function of $H$ and we get that there exists $h_0 \in H$ such that $\beta(u)=h_0^{-1}\beta(v)h_0$ and 
		\[\modulus{h_0}_H \leq \CLF_H\big(\modulus{\beta(u)}_H+\modulus{\beta(v)}_H\big).\]
Let $g_0$ be a minimal length element in the pre-image $\beta^{-1}(h_0)$. Set $v_0=g_0^{-1}vg_0$. Then $\beta(v_0)=\beta(u)$ and $v_0$ is conjugate to $u$ via an element $g_0$ satisfying
		\[\modulus{g_0} = \modulus{h_0}_H \leq \CLF_H(n).\]
Now we apply Case 2, above, to find a bounded conjugator $ha$ for $u$ and $v_0$. Then all we need to do is to pre-multiply it by $g_0$ to obtain a conjugator for $u$ and $v$. In other words, we have a conjugator $g_0ha$ for $u$ and $v$ such that
		\[\modulus{g_0ha} \leq \CLF_H(n) + \rho (u) + \invdist_F^G\Big(\TCLF_F\big(2\delta_F^G(n+\rho(u));\varphi_u\big)\Big).\]
This is enough to complete the proof.
\end{proof}

By taking a group extension with cyclic quotient we can reduce this to a simpler expression.

\begin{cor}\label{cor:CLF for group extension with quotient Z}
Suppose in the extension given in (\ref{eq:ses}) the quotient $H$ is $\Z$. Then
		\[\CLF_G(n) \leq \max \left\{  \RCLF_F^G(n)   , n + \invdist_F^G\left(\TCLF_F\left(2\delta_F^G(2n);A_G^{(n)}\right)\right)\right\}\]
where $A_G^{(n)}=\{\varphi_u \in A_G \mid u \in G, \ \modulus{u}\leq n \}$.
\end{cor}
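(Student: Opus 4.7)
The plan is to apply Theorem~\ref{thm:group extensions} directly with $H = \Z$ and simplify each of the three ingredients on its right-hand side. The conjugacy length function of $\Z$ vanishes identically: since $\Z$ is abelian, two conjugate elements are equal and the identity is a conjugator, so $\CLF_\Z(n) = 0$. This disposes of one summand immediately.

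The substantive step is to bound $\rho_n$. Inspecting the proof of Theorem~\ref{thm:group extensions}, one sees that $\rho(u)$ is invoked only in Cases 2 and 3, both of which apply only when $\beta(u) \neq e_H$; in Case 1 the argument bypasses $\rho$ entirely and is absorbed into the $\RCLF_F^G(n)$ term. So I only need to control $\rho(u)$ when $\beta(u) \neq 0$. For such $u$, the centraliser $Z_H(\beta(u))$ is all of $\Z$, while $\beta(Z_G(u))$ contains the cyclic subgroup $\langle \beta(u) \rangle$. Hence $\beta(Z_G(u)) = k\Z$ for some positive integer $k$ dividing $\beta(u)$, and every coset of $k\Z$ in $\Z$ admits a representative of absolute value at most $k/2 \leq \modulus{\beta(u)}/2 \leq \modulus{u}/2$. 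Maximising over $\modulus{u} \leq n$ gives $\rho_n \leq n/2 \leq n$, and in particular condition \ref{item:centraliser domain} is automatic in this setting.

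To finish, substitute $\CLF_H(n) = 0$ and $\rho_n \leq n$ into Theorem~\ref{thm:group extensions}. Since $\delta_F^G$ is non-decreasing, $\delta_F^G(n + \rho_n) \leq \delta_F^G(2n)$, and the second term in the maximum becomes at most
\[
n + \invdist_F^G\!\left(\TCLF_F\!\left(2\delta_F^G(2n); A_G^{(n)}\right)\right),
\]
which is exactly the bound claimed. There is no genuine obstacle; the only subtlety worth flagging is that $\rho(u)$ need only be controlled for elements whose image in $H$ is non-trivial, a feature not transparent from the statement of Theorem~\ref{thm:group extensions} but visible on inspection of its proof, and which is what permits the uniform bound $\rho_n \leq n$ despite no finiteness hypothesis being imposed on $\rho(u)$ for $u \in F$.
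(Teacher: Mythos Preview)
Your proof is correct and follows the same route as the paper's own two-line argument: observe $\CLF_\Z \equiv 0$ and $\rho(u) \leq \modulus{u} \leq n$, then substitute into Theorem~\ref{thm:group extensions}. Your explicit remark that $\rho(u)$ need only be controlled when $\beta(u) \neq 0$---because Case~1 of the Theorem's proof bypasses $\rho$ entirely---is a genuine point that the paper leaves implicit, and it is what makes the Corollary hold without any hypothesis on $\rho(u)$ for $u \in F$ (where $\rho(u)$ may well be infinite, e.g.\ for $u=a$ in $\BS{m}$).
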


\begin{proof}
Let $u,v$ be conjugate in $G$ such that $\modulus{u}+\modulus{v}\leq n$. Since $H=\Z$, the conjugacy length function of $H$ is the zero function. Furthermore we have $\rho(u)\leq \modulus{u}\leq n$.
\end{proof}

A central extension is another situation where the expression is significantly simplified. Unlike with the cyclic extensions, we retain the need to understand the function $\rho$. In particular, if $F$ is contained in the centre of $G$ then Theorem \ref{thm:group extensions} reduces to
		\[\CLF_G(n)\leq \CLF_H(n)+\rho_n.\]
However we can see from this an example of the limitations of this result. If we take the Heisenberg group,
		\[H_3(\Z)=\langle x,y,z \mid [x,y]=z\rangle\]
then this fits into a central extension of the form of (\ref{eq:ses}) with $F=\langle z \rangle$. However, it is not hard to see that the centraliser of $x$ consists precisely of elements of the form $x^rz^s$, for any pair of integers $r,s$. Projecting this centraliser onto $H_3(\Z)/{\langle z \rangle}\cong \Z^2$ gives a copy of $\Z$, implying that $\rho_n$ cannot be finite and Theorem \ref{thm:group extensions} does not apply.

\section{Abelian-by-cyclic groups}\label{sec:abelian-by-cyclic}

An abelian-by-cyclic group $\Gamma$ has a short exact sequence
		\[1 \longrightarrow A \longrightarrow \Gamma \longrightarrow \Z \longrightarrow 1\]
where $A$ is an abelian group. Following the work of Bieri and Strebel \cite{BS78}, the finitely presented, torsion-free, abelian-by-cyclic groups are given by presentations of the form
		\[\Gamma_M = \langle t, a_1, \ldots , a_d \mid [a_i,a_j]=1,ta_it^{-1}=\varphi_M(a_i); i,j=1,\ldots,d\rangle\]
where $M=(m_{ij})$ is a $d \times d$ matrix with integer entries and non-zero determinant and $\varphi_M(a_i)=a_1^{m_{1i}}\ldots a_d^{m_{di}}$ for each $i=1,\ldots , d$. The aim of this section is to give a linear upper bound for the conjugacy length function of a certain family of abelian-by-cyclic groups:

\begin{thm}\label{thm:clf of ab-by-cyc M expanding}
Suppose $M$ is a diagonalisable matrix, all of whose eigenvalues have absolute value greater than $1$. Then there exists a constant $C$ depending on $M$ such that
		\[\CLF_{\Gamma_M}(n) \leq Cn.\]
\end{thm}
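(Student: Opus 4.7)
The plan is to apply Corollary \ref{cor:CLF for group extension with quotient Z} to the short exact sequence
\[1 \longrightarrow A \longrightarrow \Gamma_M \overset{\beta}{\longrightarrow} \Z \longrightarrow 1,\]
where $\beta$ sends $t \mapsto 1$ and each $a_i \mapsto 0$, and $A$ is the abelian kernel, identified with the direct limit of the system $\Z^d \overset{M}{\to} \Z^d \overset{M}{\to} \cdots$ sitting inside $\R^d$. Because $A$ is abelian, the inner automorphism of $\Gamma_M$ induced by any $u \in \Gamma_M$ restricts on $A$ to the linear map $M^{-\beta(u)}$, so the family $A_G^{(n)}$ is contained in $\{ M^{-k} : |k| \leq n \}$. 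Three ingredients then need to be estimated: the subgroup distortion of $A$ in $\Gamma_M$, the twisted conjugacy length of $A$ under $A_G^{(n)}$, and the restricted conjugacy length $\RCLF_A^{\Gamma_M}$.

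The analytic engine is a uniform bound on $\|(\Id - M^{-k})^{-1}\|$ as $k$ ranges over non-zero integers. Since $M$ is diagonalisable with every eigenvalue $\lambda_i$ satisfying $|\lambda_i| > 1$, the eigenvalues of $\Id - M^{-k}$ are $1 - \lambda_i^{-k}$, which obey $|1 - \lambda_i^{-k}| \geq 1 - \max_i |\lambda_i|^{-1} > 0$ when $k \geq 1$ and $|1 - \lambda_i^{-k}| \geq \min_i |\lambda_i| - 1 > 0$ when $k \leq -1$. These combine into a $k$--uniform spectral gap, giving a constant $C_0$ with $\|(\Id - M^{-k})^{-1}\| \leq C_0$ for every $k \neq 0$. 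For any $u, v \in A$ that are $M^{-k}$--twisted conjugate, the unique twisted conjugator $a \in A$ must satisfy $(\Id - M^{-k})(a) = u - v$, hence has Euclidean norm at most $C_0(\|u\| + \|v\|)$.

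For the distortion analysis I would measure $A$ with the restriction of the $\Gamma_M$--word metric, so that $\delta_A^{\Gamma_M}$ and $\invdist_A^{\Gamma_M}$ become the identity; the real content is then to show that the twisted conjugator $a$ constructed above in fact has small $\Gamma_M$--length. Writing $u - v$ in normal form $M^{-j}(w)$ with $w \in \Z^d$ and $2j + |w|_{\Z^d} = O(n)$, and using the identity $(\Id - M^{-k})^{-1} = M^k(M^k - \Id)^{-1}$, one can express $a = M^{k-j}(M^k - \Id)^{-1}(w)$ and show $|a|_{\Gamma_M} = O(n)$, provided one controls the ``denominator'' of $(M^k - \Id)^{-1}(w)$ inside the $\Z[M, M^{-1}]$--module structure of $A$. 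Together with the uniform operator bound, this yields a linear contribution from the twisted-conjugacy term of Corollary \ref{cor:CLF for group extension with quotient Z}.

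Finally, for the restricted conjugacy length function: two elements $a, b \in A$ are conjugate in $\Gamma_M$ if and only if $b = M^{-k}(a)$ for some $k \in \Z$, in which case $t^k$ is itself a conjugator of length $|k|$, and the expansion property of $M$ forces $|k| \leq C_2(|a|_{\Gamma_M} + |b|_{\Gamma_M})$ because the Euclidean norms $\|a\|$ and $\|b\|$ have logarithms bounded linearly by the word lengths. Combining the three estimates via Corollary \ref{cor:CLF for group extension with quotient Z} yields $\CLF_{\Gamma_M}(n) \leq Cn$. The main obstacle is the effective translation between the linear-algebraic bound on $a = (\Id - M^{-k})^{-1}(u-v)$ in $\R^d$ and its word length in $\Gamma_M$: the uniform spectral gap gives a clean Euclidean-norm bound, but extracting from it a normal form representation $a = M^{-j'}(w')$ with both $j'$ and $|w'|_{\Z^d}$ linear in $n$ requires a careful analysis of how $\det(M^k - \Id)$ interacts with the localisation $\Z^d[1/\det M]$ that forms the abelian kernel.
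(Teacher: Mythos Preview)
Your overall architecture matches the paper's: apply Corollary~\ref{cor:CLF for group extension with quotient Z}, identify $A_G^{(n)}=\{\varphi_M^i:\lvert i\rvert\le n\}$, measure $A$ with the restricted $\Gamma_M$--metric so that the distortion functions are trivial, and handle $\RCLF_A^{\Gamma_M}$ via conjugation by powers of $t$ (this is Proposition~\ref{prop:ab-by-cyc RCLF}). That part is fine.

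The gap is in the twisted conjugacy step. Your uniform bound $\lVert(\Id-M^{-k})^{-1}\rVert\le C_0$ controls the \emph{Euclidean} norm of the conjugator $a$ in $\R^d$, but Euclidean norm does not control $\Gamma_M$--word length: the element $M^{-p}e_1$ has norm $\asymp\lambda_2^{-p}\to 0$ yet, by Lemma~\ref{lem:ab-by-cyc distortion}, word length $\asymp p$. What you actually need is a bound on the \emph{depth} $p(a)=\min\{p:M^pa\in\Z^d\}$, and the spectral argument gives none. Your attempted fix also contains an error: writing $u-v=M^{-j}w$ you claim $2j+|w|_{\Z^d}=O(n)$, but Lemma~\ref{lem:ab-by-cyc distortion} says the $A_0$--distortion is exponential, so only $j$ and $\log|w|_{\Z^d}$ are $O(n)$; $|w|_{\Z^d}$ itself can be exponential in $n$. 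The subsequent ``denominator'' analysis of $(M^k-\Id)^{-1}w$ is left open, and since $\det(M^k-\Id)$ need not be a power of $\det M$ there is no obvious reason it should sit at bounded depth in $A=\bigcup_p M^{-p}\Z^d$.

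The paper (Proposition~\ref{prop:ab-by-cyc TCLF}) bypasses all of this with a combinatorial argument: rewrite both sides of the twisted equation in the normal form~\eqref{eq:ab-by-cyc normal form} and observe that if the depth $y$ of the conjugator exceeded $\max\{p+i,q\}$, the two sides would be in normal form with \emph{different} outer $t$--exponents, contradicting uniqueness. This forces $y\le n+\lvert i\rvert$ directly, and then the linear algebra (together with Lemma~\ref{lem:ab-by-cyc distortion}) finishes. That depth bound via normal-form uniqueness is the missing idea in your proposal.
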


The method of proof is to apply Corollary \ref{cor:CLF for group extension with quotient Z}. First we need to understand the distortion inside the abelian subgroup, this is the subject of Lemma \ref{lem:ab-by-cyc distortion}. Following this, we calculate estimates for the relevant restricted and twisted conjugacy length functions.

In Section \ref{sec:Z^d rtimes Z^k} we look at the abelian-by-abelian groups of the form $\Z^d \rtimes \Z^k$, where the action on $\Z^k$ corresponds to multiplication by matrices in an $\R$--split torus inside $\SL{d}$. This of course includes the abelian-by-cyclic groups $\Gamma_M$ where $M$ is a diagonalisable matrix in $\SL{d}$ whose eigenvalues are all real.

We need to introduce a \emph{normal form} for elements of $\Gamma_M$. Firstly, denote by $A_p$ the subgroup of $\Gamma_M$ generated by $\{t^{-p}a_1t^p,\ldots ,t^{-p}a_dt^p\}$ for each integer $p$. Note that the relation $ta_it^{-1}=\varphi_M(a_i)$ implies that for each integer $p$ we have $A_p = \varphi_M(A_{p+1}) \leq A_{p+1}$. Let $\modulus{.}_{A_0}$ denote the word metric on $A_0$ with respect to the generating set $\{ a_1, \ldots , a_d \}$.

We can write each $u \in \Gamma_M$ in the form $t^{-p}u_at^q$ for some $p,q \geq 0$ and $u_a \in A_0$. Furthermore, if $p$ and $q$ are both non-zero, and $u_a$ is in $A_{-1}=\varphi_M(A_0)$, then $u_a=tu_a't^{-1}$ for some $u_a' \in A_0$ and in which case we can re-write $u$ as $u=t^{-(p-1)}u_a't^{q-1}$. Hence we have the following normal form for all elements $u \in \Gamma_M$:
\begin{equation}\label{eq:ab-by-cyc normal form}
u=t^{-p}u_at^q, \textrm{ where $p,q \geq 0$ and if $p,q>0$ then $u_a \notin A_{-1}$.}
\end{equation}

Note that by rewriting elements of $A_0$ in additive notation, one can see how repeated applications of the automorphism $\varphi_M$ correspond to taking a power of $M$. That is, for all $k \in \Z$,
\begin{equation}\label{eq:ab-by-cyc:action of varphi_M on A_0}
\varphi_M^k=\varphi_{M^k}.
\end{equation}

The subgroup $A$ of $\Gamma_M$, given in its defining short exact sequence, is normally generated by $\{a_1, \ldots , a_d\}$. In general $A$ may not be finitely generated, and indeed it will never be when $\det M>1$. As an example, when $d=1$ and $M = (m)$ we get $\Gamma_M=\BS{m}=\Z[\frac{1}{m}]\rtimes_m\Z$, a solvable Baumslag--Solitar group. In this case $A=\Z[\frac{1}{m}]$, which is generated by $\{a=a_1, t^{-1}at,t^{-2}at^2, \ldots \}$ but not by any finite set.

\subsection{The Farb-Mosher space $X_M$}

Suppose that $M\in \GL{d}$ is diagonalisable with all eigenvalues greater than $1$. In \cite{FM01} Farb and Mosher describe a geodesic metric space $X_M$ which is quasi-isometric to $\Gamma_M$. In short, the space $X_M$ can be recognised as the horocyclic product of a $(\det M + 1)$--valent tree $T_M$ and the Lie group $G_M=\R^d \rtimes_M \R$. For a discussion on horocyclic products, in particular horocyclic products of graphs, we refer the reader to \cite{BNW08}. The geometry of $G_M$ has been studied in \cite{DP11}. In order to define $X_M$ we require that $M$ lies on a one-parameter subgroup of $\GL{d}$, hence the requirement that all eigenvalues of $M$ are greater than $1$.

Note that if $M$ does not lie on a one-parameter subgroup but is diagonalisable and has eigenvalues whose absolute values are all greater than $1$, then we may instead consider $\Gamma_{M^2}$, which is an index $2$ subgroup of $\Gamma_M$. We can then define $X_{M^2}$, to which $\Gamma_M$ will be quasi-isometric. Until Lemma \ref{lem:ab-by-cyc distortion}, however, we will assume that all eigenvalues of $M$ are positive.

On $T_M$ we take the path metric $d_T$ with edges assigned length $1$. Fix a basepoint $o \in T_M$ and consider any geodesic ray $\rho_T$ emerging from $o$. This ray determines an ideal point $\omega_T$ in the boundary of $T_M$. For any pair of vertices $x$ and $y$ in $T_M$, the two rays, both asymptotic to $\rho_T$, emerging from $x$ and $y$ respectively will merge at some vertex. We call this vertex the \emph{greatest common ancestor} of $x$ and $y$ and denote it $x \curlywedge y$. Define a height function $\mathfrak{h}:T_M \to \R$ by 
		\[\mathfrak{h}(x)=d_{T}(x,o\curlywedge x)-d_{T}(o\curlywedge x,o).\]
This height function plays a crucial role in the definition of the horocyclic product determining $X_M$. A Busemann function on $G_M$ is the last ingredient necessary to define $X_M$.

\begin{figure}[t!]
\labellist \hair 5pt \small
	\pinlabel $\omega_T$ [b] at 220 413
	\pinlabel $o$ [r] at 48 89
	\pinlabel $x$ [r] at 202 377
	\pinlabel $\mathfrak{h}(x)=-2$ [r] at 0 377
	\pinlabel $\mathfrak{h}(o)=0$ [r] at 0 89
\endlabellist
\centering\includegraphics[width=8cm]{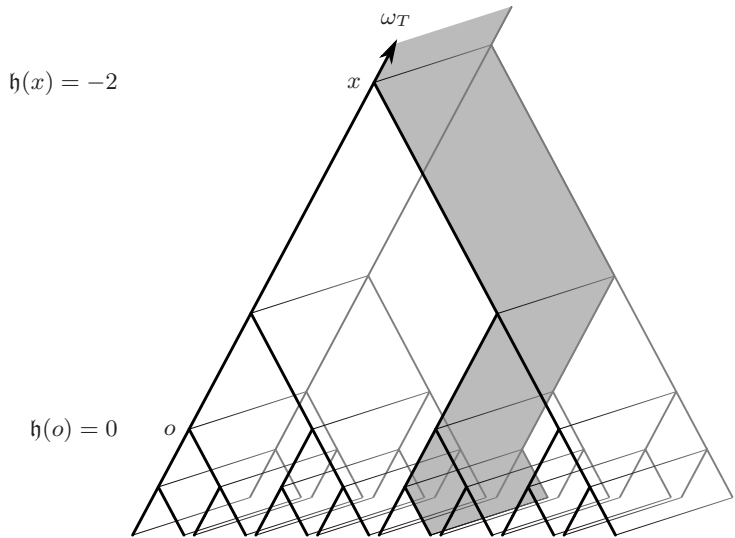}
\caption{Treebolic space, $X_M$ when $M=(2)$. The shaded region represents an example of a coherent hyperplane.}\label{fig:treebolic}
\end{figure}

Given an element $g \in G_M$, we will write it as $g=(\mathbf{g},t_g)$, where $\mathbf{g} \in \R^d$ and $t_g \in \R$. The Busemann function of $G_M$ that we use is that associated to the boundary point $\omega_G$ determined by the ray $\rho_G(t)=(\mathbf{0},t)$, for $t \geq 0$. For a point $(\mathbf{g},t_g) \in G_M$ it takes the value $-t_g \in \R$.

The space $X_M$ is the \emph{horocyclic product} of $T_M$ and $G_M$, with respect to the height function $\mathfrak{h}$ defined on $T_M$ and the Busemann function just defined on $G_M$. It is the subspace of the product space $T_M \times G_M$ consisting of those $(x,g) \in T_M \times G_M$ such that if $g=(\mathbf{g},t_g)$ then $\mathfrak{h}(x)+t_g=0$. We take the metric $d_X$ to be the length metric induced on $X_M$ by the $\ell^\infty$--metric on $T_M \times G_M$.

To give some intuition, very roughly speaking, both the height function and Busemann function tell you how far a point is from the respective ideal points, $\omega_T$ and $\omega_G$. If a point in $T_M \times G_M$ lies in the horocyclic product, and its $T_M$ coordinate lies ``a long way'' from $\omega_T$, then its $G_M$ coordinate must lie ``close'' to $\omega_G$.

Consider the two projections $\pi_G:X_M \to G_M$ and $\pi_T:X_M\to T_M$. Given a bi-finite line $\ell$ in $T_M$, with one end of $\ell$ asymptotic to $\omega_T$, the pre-image $\pi_T^{-1}(\ell)$ is isometric to $G_M$. Farb and Mosher \cite{FM01} call these \emph{coherent hyperplanes} in $X_M$.

\begin{example}
When $d=1$ and $M$ is the $1 \times 1$ matrix $(m)$ we get the solvable Baumslag--Solitar group $\BS{m}$. The Farb-Mosher space associated to $\BS{m}$ is the so-called ``treebolic'' space --- it is the horocyclic product of $T_M$, an $m+1$--valent tree, and $G_M$, which will be the hyperbolic plane with a rescaled metric (rescaled so that the horocycles line up suitably). Figure \ref{fig:treebolic} shows a portion of this space, using the upper-half plane model for the hyperbolic plane. The ideal point $\omega_G$ will be $\infty$. In Figure 1, the reader may picture $\omega_T$ in the direction of top of the page and $\omega_G$ towards the bottom. Thus, in the coherent hyperplanes, the upper-half plane model is ``up-side down'' to how it is usually drawn.
\end{example}

\begin{lemma}\label{lem:horocyclic metric bounds}
Let $M$ be a diagonalisable matrix with all eigenvalues greater than $1$. For any pair of points $(x,g),(y,h) \in X_M$,
		\[\max\{d_G(g,h),d_T(x,y)\} \leq d_X\big( (x,g) , (y,h) \big) \leq d_G(g,h)+d_T(x,y).\]
\end{lemma}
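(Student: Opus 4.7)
The plan is to prove the two inequalities separately.

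For the lower bound, I would show that both projections $\pi_T : X_M \to T_M$ and $\pi_G : X_M \to G_M$ are $1$-Lipschitz with respect to the length metric $d_X$. For any rectifiable path $\sigma$ in $X_M$ and any partition $0 = s_0 < s_1 < \cdots < s_N = 1$, each step satisfies $d_{\ell^\infty}(\sigma(s_i),\sigma(s_{i+1})) = \max\{d_T(\pi_T\sigma(s_i),\pi_T\sigma(s_{i+1})),\, d_G(\pi_G\sigma(s_i),\pi_G\sigma(s_{i+1}))\}$, which dominates each individual coordinate distance. Summing over the partition and taking the infimum over all paths in $X_M$ yields $d_T(x,y) \leq d_X((x,g),(y,h))$ and $d_G(g,h) \leq d_X((x,g),(y,h))$, hence the lower bound.

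The key observation for the upper bound is that for a rectifiable path $\sigma(s) = (x(s),\gamma(s))$ in $X_M$, the horocyclic constraint $\mathfrak{h}(x(s)) = -t_{\gamma(s)}$ implies that the $T_M$-speed of $x(s)$ equals $|\tfrac{d}{ds} t_{\gamma(s)}|$, which is bounded above by the total $G_M$-speed of $\gamma(s)$. Hence the $\ell^\infty$-length of $\sigma$ coincides with the $G_M$-arc length of $\pi_G\circ\sigma$. It therefore suffices to exhibit an \emph{admissible} path in $G_M$ from $g$ to $h$ of length at most $d_G(g,h) + d_T(x,y)$, where admissibility means the tree-lift of the path, starting at $x$ and determined by the height constraint up to branch choices during descent, can be made to end at $y$.

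Let $z := x \curlywedge y$ and take $\gamma_G$ to be the $G_M$-geodesic from $g$ to $h$, with minimum $t$-value $c^* \leq \min\{t_g,t_h\} = -\max\{\mathfrak{h}(x),\mathfrak{h}(y)\}$. During the ascent phase of the lift the tree coordinate is forced along the ray from $x$ towards $\omega_T$; during descent one is free to choose branches. Steering to $y$ is possible precisely when the lift reaches height $\mathfrak{h}(z)$, equivalently when $c^* \leq -\mathfrak{h}(z)$; then the lift of $\gamma_G$ already achieves $d_X((x,g),(y,h)) \leq d_G(g,h)$. Otherwise, augment $\gamma_G$ at its lowest point by a round-trip vertical detour down to level $-\mathfrak{h}(z)$ and back; vertical motion has unit speed in $G_M$, so this adds $2(c^*+\mathfrak{h}(z))$ to the length. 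The estimate
\[2(c^*+\mathfrak{h}(z)) \leq 2(\mathfrak{h}(z)-\max\{\mathfrak{h}(x),\mathfrak{h}(y)\}) \leq (\mathfrak{h}(z)-\mathfrak{h}(x))+(\mathfrak{h}(z)-\mathfrak{h}(y)) = d_T(x,y)\]
then completes the upper bound.

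The step requiring most care is the identification of the $\ell^\infty$-length of a path in $X_M$ with the $G_M$-arc length of its projection: this rests on the fact that $T_M$ has no continuous direction of constant height (its horocycles are discrete sets of vertices), so any continuous tree motion is purely vertical in $\mathfrak{h}$ and synchronizes with the vertical part of the $G_M$-motion via the horocyclic product constraint. Once this is in place, the only remaining work is the elementary detour estimate above.
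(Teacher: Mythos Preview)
Your lower bound is fine and matches the paper's argument. The upper-bound strategy, however, has a systematic orientation error that makes the written argument collapse.

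Recall the horocyclic constraint $\mathfrak{h}(x(s))=-t_{\gamma(s)}$ and that the common ancestor $z=x\curlywedge y$ lies \emph{towards} $\omega_T$, so $\mathfrak{h}(z)\le\min\{\mathfrak{h}(x),\mathfrak{h}(y)\}$. Reaching $z$ in the tree therefore forces the $G_M$--coordinate to reach the \emph{larger} $t$--value $-\mathfrak{h}(z)\ge\max\{t_g,t_h\}$. Thus the relevant extremum of the $G_M$--geodesic is its \emph{maximum} $t$--value, not the minimum $c^*$ you introduce; with your convention the condition ``$c^*\le -\mathfrak{h}(z)$'' is automatically satisfied (since $c^*\le t_g\le -\mathfrak{h}(z)$), so the case split is vacuous. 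Correspondingly, the detour must go \emph{up} in $t$, not down. Finally, your displayed chain ends with $(\mathfrak{h}(z)-\mathfrak{h}(x))+(\mathfrak{h}(z)-\mathfrak{h}(y))=d_T(x,y)$, but in fact $d_T(x,y)=(\mathfrak{h}(x)-\mathfrak{h}(z))+(\mathfrak{h}(y)-\mathfrak{h}(z))$; your expression equals $-d_T(x,y)$, so the estimate as written proves nothing. If you replace $c^*$ by the maximum $t$--value $c^\dagger\ge\max\{t_g,t_h\}=-\min\{\mathfrak{h}(x),\mathfrak{h}(y)\}$ and detour upward, the added length is $2(-\mathfrak{h}(z)-c^\dagger)\le 2(\min\{\mathfrak{h}(x),\mathfrak{h}(y)\}-\mathfrak{h}(z))\le d_T(x,y)$, and the argument goes through. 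You should also justify (or sidestep) that the $G_M$--geodesic is unimodal in $t$ so that the lift can indeed be steered after a single visit to $z$.

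For comparison, the paper avoids these issues entirely: it builds an explicit three-segment path through $(x\curlywedge y,g')$ and $(x\curlywedge y,h')$, where $g'=(\mathbf{g},-c)$, $h'=(\mathbf{h},-c)$ and $c=\mathfrak{h}(z)$. The two vertical legs contribute exactly $d_T(x,y)$, the middle leg lies in a single coherent hyperplane and has length $d_G(g',h')$, and the hypothesis that all eigenvalues of $M$ exceed $1$ gives $d_G(g',h')\le d_G(g,h)$ because $t_g,t_h\le -c$. This route never needs to analyse $G_M$--geodesics or lift them, and it is where the eigenvalue hypothesis is actually used; your approach, once the signs are fixed, does not invoke that hypothesis for the upper bound.
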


\begin{proof}
Let $\sigma$, denote a geodesic in $X_M$ from $(x,g)$ to $(y,h)$. Let $\sigma_T=\pi_T(\sigma)$ be the projection of the geodesic into the tree component $T_M$, and similarly $\sigma_G=\pi_G(\sigma)$. Since the projections do not increase lengths of paths, the lower bound in the Lemma is straight-forward to obtain by looking at the lengths of $\sigma_T$ and $\sigma_G$.

We now focus on obtaining the upper bound. Since $\sigma_T$ is a path from $x$ to $y$ it must at some point pass through the common ancestor $x \curlywedge y$. Let $c=\mathfrak{h}(x\curlywedge y)$. Notice that
		\[d_T(x,y) = \mathfrak{h}(x)+\mathfrak{h}(y)-2c.\]
Write $g=(\mathbf{g},t_g)$ and $h=(\mathbf{h},t_h)$, where $\mathbf{g,h}\in\R^d$ and $t_g,t_h \in \R$. Then we can draw a piecewise geodesic path from $(x,g)$ to $(y,h)$ by travelling via $(x\curlywedge y, g)$ and $(x \curlywedge y, h)$. By setting $g'=(\mathbf{g},-c)$ and $h'=(\mathbf{h},-c)$, the triangle inequality then gives
\begin{eqnarray*}
d_X\big( (x,g) , (y,h) \big) & \leq & \mathfrak{h}(x)-c + d_G(g',h') + \mathfrak{h}(y)-c\\
		& = & d_T(x,y)+d_G(g',h').
\end{eqnarray*}
The last thing to observe here is that $t_g=-\mathfrak{h}(x) \leq -c$, and similarly $t_h \leq -c$. Hence $d_G(g,h) \geq d_G(g',h')$, since $M$ is diagonalisable over $\R$ and ${\lambda}>1$ for each eigenvalue $\lambda$ of $M$. This proves the upper bound.
\end{proof}

To make our calculations in Lemma \ref{lem:ab-by-cyc distortion} more precise we will use a metric $d_L$ on $G_M$, as described in \cite{DP11}, which is bi-Lipschitz equivalent to $d_G$. To motivate the metric $d_L$, we consider the case when $M=(m)$ and $G_M$ is the rescaled hyperbolic plane (we picture it with the upper-half plane model, and $\infty$ vertically upwards). Consider a geodesic in $G_M$ connecting points $x$ and $y$. It will either be a vertical line, when $x$ and $y$ lie one above the other, or a circular arc with centre on the horizontal axis (although the rescaling of the metric means it will not, strictly speaking, be circular). We can approximate the latter type of geodesic by using a path which consists of 2 vertical geodesics and one horizontal path. The two vertical segments will start at $x$ and $y$ respectively and take us upwards to a common horocycle. This horocycle will be the lowest horocycle where the two vertical geodesics are separated by at most distance 1 (if it is strictly less than 1, then one vertical geodesic will have length 0). The horizontal segment will then connect the two top ends of these geodesics creating a path from $x$ to $y$. The length of this path gives us an estimate for $d_G(x,y)$.

\begin{defn}For each horocycle $\R^n \times\{t\}$ in $G_M$ consider the metric
		\[d_{t,M}\big((a,t),(b,t)\big) := \norm{M^{-t}(b-a)}\]
where $\norm{.}$ is  the $\ell^1$--norm on $\R^d$. Given $(a,t_a),(b,t_b) \in \Gamma_M$, let $t_0\in \R$ be minimal such that $d_{t_0,M}\big((a,t_0),(b,t_0)\big)\leq 1$. If $t_0\geq t_a,t_b$ then define
		\[d_L\big( (a,t_a),(b,t_b)\big) := \modulus{t_a-t_0}+\modulus{t_0-t_b} + 1.\]
On the other hand, if $t_a\geq t_b,t_0$ then set
		\[d_L\big( (a,t_a),(b,t_b)\big) := t_a-t_b + \norm{M^{-t_a}(b-a)}.\]
\end{defn}

The metric $d_L$ will be bi-Lipschitz equivalent to $d_G$. In particular, let $K_M>0$ be such that
		$$\frac{1}{K_M}d_G(g,h) \leq d_L(g,h) \leq K_Md_G(g,h).$$

We can recognise $\Gamma_M$ as the HNN extension of $A_0=\langle a_1, \ldots a_d\rangle$ by the stable letter $t$. The corresponding Bass-Serre tree will then be $T_M$ and we will identify the basepoint vertex $o$ in $T_M$ with the vertex of the Bass-Serre tree which is stabilised by $A_0$. The action of $\Gamma_M$ on $T_M$ is then given by the usual action of an HNN extensions on its Bass-Serre tree.

Note that we may embed $\Gamma_M$ in $G_M$ via the homomorphism which sends the generators $a_1,\ldots ,a_d$ of $A_0$ to the standard integer basis elements of $\R^d\times \{0\}$ and sends $t$ to $(0,1)$. Denote the image of $u \in \Gamma_M$ under this homomorphism by $\overline{u}$.

\begin{lemma}\label{lem:ab-by-cyc distortion}
Suppose $M$ is diagonalisable over $\R$ with all eigenvalues of absolute value greater than $1$. Let $u \in A$ and suppose $p$ is the minimal non-negative integer such that $u \in A_p$. Let $u_a \in A_0$ be such that $u=t^{-p}u_at^p$. Then
\begin{enumerate}[label=(\roman{enumi})]
\item\label{item:ab-by-cyc metric lower bound}		$ p \preceq \modulus{u}$;
\item\label{item:ab-by-cyc metric bounds}		$\log\modulus{u_a}_{A_0} \preceq \modulus{u} \preceq  \max\{\log \modulus{u_a}_{A_0} , p \}$;
\end{enumerate}
where the constants involved in each $\preceq$ or $\asymp$ relation depend only on the choice of $M$.
\end{lemma}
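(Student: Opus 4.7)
The plan is to transfer the question to the Farb--Mosher space $X_M$. After replacing $M$ by $M^2$ if necessary (which passes to an index-two and hence quasi-isometric subgroup), we may assume all eigenvalues of $M$ are positive and exceed $1$, so that $X_M$ is defined. Since $\Gamma_M$ acts properly cocompactly by isometries on $X_M$, fixing a basepoint $x_0 = (o, \overline{e})$ gives $\modulus{u} \asymp d_X(x_0, u \cdot x_0)$, and the two coordinates of the endpoint can be computed separately. In $G_M = \R^d \rtimes_M \R$, a direct calculation using $\overline{t} = (\mathbf{0}, 1)$ and $\overline{a_i} = (e_i, 0)$ gives $\overline{u} = (M^{-p}\,\mathbf{u_a},\, 0)$, where $\mathbf{u_a} \in \Z^d$ is the integer vector corresponding to $u_a \in A_0 \cong \Z^d$; its $\ell^1$-norm equals $\modulus{u_a}_{A_0}$. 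In the tree, the ascending HNN structure gives $\Stab(t^{-j}\cdot o) = A_j$ for $j \geq 0$, so the increasing chain $A_0 \subsetneq A_1 \subsetneq \ldots$ consists of the stabilizers of successive vertices along the ray $o, t^{-1}\cdot o, t^{-2}\cdot o, \ldots$

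Computing $d_T(o, u\cdot o)$ is the main geometric input. The fixed set of $u = t^{-p} u_a t^p$ in $T_M$ is the $t^{-p}$-translate of the fixed set of $u_a$; for $u_a \in A_0 \setminus A_{-1}$ (as forced by the normal form whenever $p \geq 1$) this latter fixed set is exactly the ray $\{o, t^{-1}\cdot o, t^{-2}\cdot o, \ldots\}$, so the former is the ray starting at $t^{-p}\cdot o$ and continuing in the same direction. The projection of $o$ onto this ray is $t^{-p}\cdot o$, at tree distance $p$, and the minimality of $p$ gives $u \notin A_{p-1} = \Stab(t^{-(p-1)}\cdot o)$; hence $u$ does not fix the neighbor of $t^{-p}\cdot o$ through which $o$ is approached, and so must move the entire branch at $t^{-p}\cdot o$ containing $o$ to a different branch. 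A standard elliptic-isometry argument then gives $d_T(o, u\cdot o) = 2p$, and a short calculation with the height function confirms $\mathfrak{h}(u\cdot o) = 0$, placing the endpoint in $X_M$.

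Lemma~\ref{lem:horocyclic metric bounds} now sandwiches $d_X$ between $\max\{d_T, d_G\}$ and $d_T + d_G$, reducing everything to estimating $d_G(\overline{e}, \overline{u})$. Working with the bi-Lipschitz metric $d_L$, the minimal $t_0$ satisfying $\norm{M^{-t_0 - p}\mathbf{u_a}} \leq 1$ determines $d_L$; a short case analysis yields
\[
d_G(\overline{e}, \overline{u}) \asymp \max\bigl\{\log \modulus{u_a}_{A_0} - Cp,\ 0\bigr\}
\]
up to an additive constant, for some $C > 0$ depending only on the eigenvalues of $M$. Part~\ref{item:ab-by-cyc metric lower bound} follows at once from $\modulus{u} \succeq d_T = 2p$. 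For part~\ref{item:ab-by-cyc metric bounds}, the upper bound $\modulus{u} \preceq d_T + d_G \preceq p + \log\modulus{u_a}_{A_0} \preceq \max\{p,\, \log\modulus{u_a}_{A_0}\}$ is immediate; the lower bound $\log\modulus{u_a}_{A_0} \preceq \modulus{u}$ splits into two cases according to whether $\log\modulus{u_a}_{A_0} \geq Cp$, handled respectively by $\modulus{u} \succeq d_G$ and by $\modulus{u} \succeq p$.

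The main obstacle is the tree-distance computation: one must pin down the orientation and stabilizer conventions for the Bass--Serre tree of the ascending HNN extension, and confirm that the non-membership $u \notin A_{p-1}$ really forces $u$ to move the branch of $t^{-p}\cdot o$ containing $o$, rather than merely permuting distant branches. Once this is in place, the remaining computations with $d_L$ are routine; notably, the upper bound in part~\ref{item:ab-by-cyc metric bounds} emerges from the horocyclic geometry without any explicit base-$M$ expansion of $u_a$ at the word level.
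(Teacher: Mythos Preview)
Your proposal is correct and follows essentially the same route as the paper: pass to $M^2$ if needed, identify $\modulus{u}$ with $d_X(x_0,u\cdot x_0)$ via the Farb--Mosher space, compute $d_T(o,u\cdot o)=2p$ from the stabiliser description $\Stab(t^{-k}o)=A_k$, and estimate $d_G(\overline{e},\overline{u})$ via the bi-Lipschitz metric $d_L$ and the threshold $t_0$, then combine using Lemma~\ref{lem:horocyclic metric bounds}. Your tree-distance argument is phrased in slightly more geometric language (fixed sets and branches) than the paper's, and your case split for the lower bound in~\ref{item:ab-by-cyc metric bounds} is made more explicit, but the underlying computation is the same.
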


\begin{remark}
A consequence of Lemma \ref{lem:ab-by-cyc distortion} (ii) is that the distortion functions, $\delta_{A_0}^{\Gamma_M}$ and $\mathfrak{d}_{A_0}^{\Gamma_M}$, with respect to the metric $\modulus{.}_{A_0}$, are both exponential. However, one should note that we will calculate the twisted conjugacy length function of $A$ using the restriction of the word metric on $\Gamma_M$. Thus, for the purposes of applying Corollary \ref{cor:CLF for group extension with quotient Z}, both distortion functions $\delta_A^{\Gamma_M}$ and $\mathfrak{d}_A^{\Gamma_M}$ are linear.
\end{remark}

\begin{proof}
If $M$ has negative eigenvalues then we need to consider instead $X_{M^2}$. Since $\Gamma_{M^2}$ is the index $2$ subgroup of $\Gamma_M$ generated by $\{a_1,\ldots,a_d,t^2\}$ we know in particular that $A$ is contained in $\Gamma_{M^2}$. It is enough therefore to obtain an estimate for the length of elements of $A$ with respect to the generating set for $\Gamma_{M^2}$. Hence in the following we assume that $M$ has no negative eigenvalues.

The element $u$ sends the basepoint $x=(o,e) \in X_M$ to $ux=(uo,\overline{u})$. The action of $u$ on $T_M$ means that $d_T(o,uo) =2p$. This follows from the fact that $u \in A_k$ if and only if $ut^{-k}o = t^{-k}o$. Hence $o \curlywedge uo = t^{-p}o$, and any path from $o$ to $uo$ must pass through the vertex $t^{-p}o$. By Lemma \ref{lem:horocyclic metric bounds}, this implies \ref{item:ab-by-cyc metric lower bound}.

\begin{figure}[h!]
\labellist \hair 5pt \small
	\pinlabel $o$ [r] at 3 21
	\pinlabel $t^{-(p-1)}o$ [r] at 115 245
	\pinlabel $ut^{-p}o=t^{-p}o$ [r] at 163 341
	\pinlabel $u$ [t] at 160 232
	\pinlabel $uo$ [l] at 324 21
\endlabellist	
\centering\includegraphics[width=4cm]{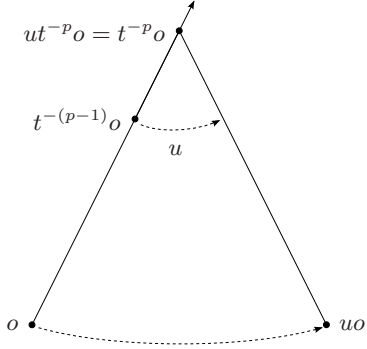}
\caption{The path from $o$ to $t^{-p}o$ will be mapped under $u$ to a path from $uo$ to $t^{-p}o$. The concatenation of these paths gives us a path from $o$ to $uo$. The choice of $p$ tells us that it is a geodesic, since $ut^{-(p-1)}o\neq t^{-(p-1)}o$.}\label{fig:tree_bass-Serre_path}
\end{figure}

Suppose $u_a=a_1^{u_1}\ldots a_d^{u_d}$. Then $\overline{u}=(M^{-p}\mathbf{u},0)$, where $\mathbf{u}=(u_1,\ldots,u_d)$. Let $t_0 = \inf \{ t \geq 0 \mid \norm{M^{-t-p}\mathbf{u}}\leq 1\}$. Firstly, supposing that $t_0 \neq 0$ leads us to 
		\[d_L(e,\overline{u})=2t_0+2p+1.\]
Since the eigenvalues of $M$ lie outside the unit circle, $M$ is expanding, hence $\norm{M^{-t-p}\mathbf{u}}$ must lie in the interval $[\lambda_2^{-t-p}\norm{\mathbf{u}},\lambda_1^{-t-p}\norm{\mathbf{u}}]$, where $\lambda_2$ is the smallest among absolute values of eigenvalues of $M$ and $\lambda_1$ the largest. Hence in particular
		\[\lambda_2^{-t_0-p}\norm{\mathbf{u}} \leq 1 \leq \lambda_1^{-t_0-p}\norm{\mathbf{u}}\]
implying that
		\[\log_{\lambda_2}\norm{\mathbf{u}} \leq t_0+p \leq \log_{\lambda_1}\norm{\mathbf{u}}.\]
So using the above values for $d_L(e,\overline{u})$ we get
		\[\frac{1}{K_M}\left(2\log_{\lambda_2}\norm{\mathbf{u}} +1\right) \leq \frac{1}{K_M}d_L(e,\overline{u})\leq d_G(x,ux)\]
		\[d_G(x,ux)  \leq K_Md_L(e,\overline{u}) \leq K_M(2\log_{\lambda_1}\norm{\mathbf{u}}+1)\]
where $K_M$ is the bi-Lipschitz constant between $d_L$ and $d_G$. An application of Lemma \ref{lem:horocyclic metric bounds} and insertion of the quasi-isometry constants between $X_M$ and $\Gamma_M$ will deal with \ref{item:ab-by-cyc metric bounds} in the case when $\norm{M^{-p}\mathbf{u}}>1$.

Now suppose that $t_0=0$. This implies that $\norm{M^{-p}\mathbf{u}} \leq 1$ and $d_L(e,\overline{u})\leq 1$. From the former we get $\lambda_2^{-1} \norm{\mathbf{u}} \leq 1$ and hence $\log_{\lambda_2}\norm{\mathbf{u}}\leq p$. The latter, alongside Lemma \ref{lem:horocyclic metric bounds}, implies
		\[ \max\{2p, 1\} \leq d_X(x,ux) \leq 2p+1.\]
Thus part \ref{item:ab-by-cyc metric bounds} holds. 
\end{proof}

\begin{figure}[h!]
\labellist \hair 5pt \small
	\pinlabel $x$ [t] at 31 22
	\pinlabel $ux$ [t] at 341 156
	\pinlabel $p$ [r] at 40 100
	\pinlabel $t_0$ [r] at 103 257
\endlabellist	
\centering\includegraphics[width=7cm,height=7cm]{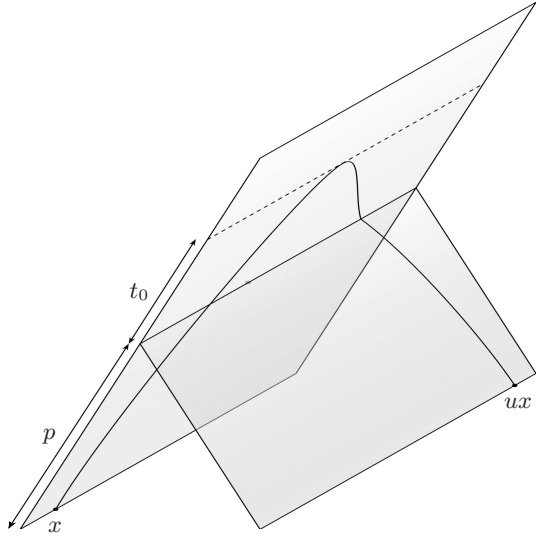}
\caption{In the case of $\BS{m}$, when $X_M$ is treebolic space, a geodesic from $x$ to $ux$ will have to reach a height of $p$ above $x$ in order to switch into the appropriate coherent hyperplane. We can project the path into one copy of the hyperbolic plane (more generally we project onto a copy of $G_M$). If the projected path is a geodesic then the value $t_0$ in the proof of Lemma \ref{lem:ab-by-cyc distortion} will be non-negative and $p+t_0$ gives a measure of the maximum height reached by the geodesic in the hyperbolic plane.}\label{fig:path_in_X_M}
\end{figure}

\subsection{Restricted conjugacy length function of $A$ in $\Gamma_M$}

We will first find a control on the restricted conjugacy length function of $A$ in $\Gamma_M$ when $M$ is diagonalisable and acts on each eigenspace by expansion.

\begin{prop}\label{prop:ab-by-cyc RCLF}
Suppose $M$ is diagonalisable with all eigenvalues having absolute value greater than $1$. Then the restricted conjugacy length function of $A$ in $\Gamma_M$ satisfies
		\[\RCLF_A^{\Gamma_M}(n) \preceq n.\]
\end{prop}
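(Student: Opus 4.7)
The plan is to exploit the fact that $A$ is abelian and normal in $\Gamma_M$. For any $g = at^k \in \Gamma_M$ with $a \in A$ and any $u \in A$, commutativity inside $A$ gives $g u g^{-1} = a \varphi_M^k(u) a^{-1} = \varphi_M^k(u)$. Hence two elements $u,v \in A$ are conjugate in $\Gamma_M$ if and only if some $k \in \Z$ satisfies $\varphi_M^k(u) = v$, and $t^k$ is then a conjugator of length exactly $|k|$, which is optimal since every conjugator $g$ satisfies $|g| \geq |\beta(g)|_\Z = |k|$. So the proposition reduces to a linear bound on the required $k$ in terms of $|u|+|v|$.

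Such a $k$ is in fact unique, since the expanding hypothesis forces $M^j - \mathrm{Id}$ to be invertible over $\R$ for each $j \neq 0$, hence injective on the torsion-free group $A$. To identify and estimate it, I would introduce the signed index $\mu(w) := \min\{j \in \Z : w \in A_j\}$ for nonzero $w \in A$. Because $\varphi_M$ restricts to a bijection $A_{j+1} \to A_j$ for every integer $j$, one has $\mu(\varphi_M(w)) = \mu(w) - 1$ on the nose, and hence the unique $k$ with $\varphi_M^k(u) = v$ must equal $\mu(u) - \mu(v)$. The proposition therefore reduces further to showing $|\mu(w)| \preceq |w|$ for every nonzero $w \in A$.

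When $\mu(w) \geq 0$, this index coincides with the $p$ appearing in the normal form of $w$, and Lemma \ref{lem:ab-by-cyc distortion}(i) gives $\mu(w) \preceq |w|$ directly. The main case is $\mu(w) = -j < 0$: here $w$ already lies in $A_0$, but is further constrained to the sublattice $A_{-j} = \varphi_M^j(A_0) = M^j A_0$ of $A_0 \cong \Z^d$. Since $M$ is diagonalisable with every eigenvalue of absolute value at least some $\lambda_{\min} > 1$, there is a constant $C>0$ depending only on $M$ such that every nonzero element of $M^j \Z^d$ has Euclidean norm at least $C\lambda_{\min}^j$; applied to the $A_0$-coordinate vector $w_a$ of $w$ this gives $\|w_a\| \geq C\lambda_{\min}^j$. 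Taking logarithms and combining with Lemma \ref{lem:ab-by-cyc distortion}(ii), which bounds $\log|w_a|_{A_0}$ above by a constant multiple of $|w|$, yields $j \preceq |w|$, completing the estimate. The only genuinely delicate step is this sublattice lower bound for $\mu(w)<0$, where the expanding hypothesis is used essentially; everything else is bookkeeping with the normal form.
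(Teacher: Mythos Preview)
Your proof is correct and uses the same ingredients as the paper's argument: the reduction to a conjugator $t^k$ via abelianness of $A$, Lemma~\ref{lem:ab-by-cyc distortion}\ref{item:ab-by-cyc metric lower bound} for the shallow case, and the expanding property of $M$ together with Lemma~\ref{lem:ab-by-cyc distortion}\ref{item:ab-by-cyc metric bounds} for the deep case. The only difference is organizational: the paper works with the non-negative indices $p,q$ and splits into the cases $k\le q-p$ versus $q-k<0=p$, whereas your signed index $\mu$ lets you decouple $u$ and $v$ completely via $k=\mu(u)-\mu(v)$ and bound $|\mu(w)|\preceq|w|$ once for a single element---a slightly cleaner packaging of the same two-case analysis.
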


\begin{proof}
Suppose $u$ and $v$ are distinct elements in $A$ which are conjugate in $\Gamma_M$. Let $p,q$ be minimal non-negative integers such that $u \in A_p$ and $v \in A_q$. Since $A$ is abelian, $u,v$ must be conjugate by $t^k$ for some integer $k$. By reversing the roles of $u,v$ if necessary, we may assume that $k$ is non-negative and that $u=t^kvt^{-k}=\varphi^k_M(v)$. Since $\varphi_M(A_q) \subseteq A_{q-1}$, we see that $u \in A_{q-k}$. But by minimality of our choice of $p$ we have either 
\begin{enumerate}
\item\label{item:ab-by-cyc:RCLF 1} $p \leq q-k$, so $k \leq q-p$; or
\item\label{item:ab-by-cyc:RCLF 2} $q-k < 0 = p$.
\end{enumerate}
Case (\ref{item:ab-by-cyc:RCLF 1}) can be dealt with using part \ref{item:ab-by-cyc metric lower bound} of Lemma \ref{lem:ab-by-cyc distortion}. Thus from the first situation we get $\modulus{t^k} \preceq \modulus{v}$ and so we have a linear control on the conjugator length between $u$ and $v$.

Case (\ref{item:ab-by-cyc:RCLF 2}) can only occur if $p=0$ and $k > q$. Suppose $v$ can be written as $t^{-q}v_at^q$ and $v_a=a_1^{v_1}\ldots a_d^{v_d}$. Then
		\[u=t^{k-q}v_at^{q-k}=\varphi_M^{k-q}(v_a).\]
If $u=a_1^{u_1}\ldots a_d^{u_d}$, then equation (\ref{eq:ab-by-cyc:action of varphi_M on A_0}) tells us that
		\[(u_1,\ldots ,u_d)\transpose=M^{k-q} \cdot (v_1 , \ldots , v_d)\transpose.\]
This gives us that $\modulus{u}_{A_0} \geq \lambda_2^{k-q}\modulus{v_a}_{A_0}\geq \lambda_2^{k-q}$, where $\lambda_2$ is equal to the minimal absolute value of an eigenvalue of $M$. Then, by part \ref{item:ab-by-cyc metric bounds} of Lemma \ref{lem:ab-by-cyc distortion}, we get
		\[k \preceq \modulus{u}+q.\]
Hence, by a final application of part \ref{item:ab-by-cyc metric lower bound} of Lemma \ref{lem:ab-by-cyc distortion} we obtain the linear upper bound for the restricted conjugacy length function of $A$ in $\Gamma_M$.
\end{proof}

\subsection{Twisted conjugacy in $A$}

Maintaining the assumption that $M$ is diagonalisable and has eigenvalues with absolute value greater than $1$ we obtain the following result regarding twisted conjugacy in the subgroup $A$.

We remind the reader that the action of $\varphi_M$ on $A$ is via conjugation by $t$ and, since $A$ is closed under conjugation by $t$ and $t^{-1}$, $\varphi_M^i$ is an automorphism of $A$ for each $i \in \Z$.

\begin{prop}\label{prop:ab-by-cyc TCLF}
Suppose $M$ is diagonalisable with all eigenvalues having absolute value greater than $1$. Then there exists $L_M>0$ such that for all $i \in \Z$ the twisted conjugacy length function of $A$, with respect to the metric on $A$ given by restricting the word metric on $\Gamma_M$ to $A$, satisfies
		\[\TCLF_A(n;\varphi_M^i) \preceq n+\modulus{i}.\]
\end{prop}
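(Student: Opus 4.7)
The plan is to exploit that $A$ is abelian, reducing the $\varphi_M^i$-twisted conjugacy relation to a linear equation. The case $i=0$ is trivial: $\varphi_M^0 = \Id$ and $A$ abelian, so $u \sim v$ iff $u = v$, and we take $\gamma = e$; the case $i<0$ follows from $i>0$ by swapping $u$ and $v$. So assume $i > 0$. Under the embedding $A \hookrightarrow \R^d$ described before Lemma~\ref{lem:ab-by-cyc distortion}, the relation $u = \gamma v \varphi_M^i(\gamma)^{-1}$ becomes the additive equation $(I - M^i)\gamma = u - v$. Since every eigenvalue $\lambda$ of $M$ satisfies $\modulus{\lambda} > 1$, the operator $(I - M^i)$ is invertible over $\R$, and $\gamma$, when it exists in $A$, is uniquely $(I - M^i)^{-1}(u - v)$.

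First I would bound $\norm{\gamma}$ in $\R^d$. The inequality $\modulus{1 - \lambda^i} \geq \min(\modulus{\lambda} - 1,\, 1 - \modulus{\lambda}^{-1}) > 0$ together with diagonalisability of $M$ gives $\norm{(I - M^i)^{-1}} \leq C_M$ uniformly for $i \neq 0$. Writing $u = M^{-p_1}\mathbf{u}_a$ and using that $M^{-1}$ is contracting, $\norm{u} \preceq \modulus{u_a}_{A_0}$, so Lemma~\ref{lem:ab-by-cyc distortion}(ii) yields $\log \norm{u} \preceq \modulus{u}$, and similarly for $v$. Hence $\log \norm{\gamma} \preceq n$.

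To translate this into a bound on $\modulus{\gamma}_{\Gamma_M}$ I would apply the upper bound in Lemma~\ref{lem:ab-by-cyc distortion}(ii): writing $\gamma = t^{-q}\gamma_a t^q$ in normal form with $\gamma_a \in A_0 \cong \Z^d$ and $q \geq 0$ minimal, we have $\modulus{\gamma}_{\Gamma_M} \preceq \max(\log \modulus{\gamma_a}_{A_0},\, q)$. Since $\norm{\gamma_a} = \norm{M^q \gamma} \leq C \lambda_1^q \norm{\gamma}$, where $\lambda_1$ is the largest absolute eigenvalue of $M$, and $\modulus{\gamma_a}_{A_0} \asymp \norm{\gamma_a}_1$, the task reduces entirely to bounding the depth $q$.

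The main obstacle, and the heart of the argument, is to show $q \leq n + \modulus{i}$. For this I intend to exploit the identity $M^{m+i}\gamma - M^m\gamma = M^m(M^i - I)\gamma = -M^m(u - v)$. For $m \geq p := \max(p_1, p_2) \leq n$ (using Lemma~\ref{lem:ab-by-cyc distortion}(i)) the right-hand side lies in $\Z^d$, so the sequence $(M^m \gamma \bmod \Z^d)_{m \geq p}$ in $\R^d / \Z^d$ is $i$-periodic. Since $\gamma \in A$ there is some $Q \geq 0$ with $M^Q\gamma \in \Z^d$; if $Q \leq p$ then $q \leq Q \leq p$, while if $Q > p$ then minimality of $Q = q$ combined with the $i$-periodicity forces $q - i < p$. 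In either case $q \leq p + i - 1 \leq n + \modulus{i}$, which combined with the previous paragraph gives $\modulus{\gamma}_{\Gamma_M} \preceq n + \modulus{i}$.
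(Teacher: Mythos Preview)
Your argument is correct and follows the same skeleton as the paper's: reduce the twisted relation to the linear equation $(I-M^i)\gamma=u-v$, invert using $|\lambda|>1$, and then bound the depth $q$ of $\gamma$ in the filtration $(A_m)$. The only substantive difference is in how the depth bound is obtained. The paper rewrites the relation as an equality of words in $\Gamma_M$, observes that if $y>p+i$ and $y>q$ then both sides are already in normal form \eqref{eq:ab-by-cyc normal form}, and reads off $y-i=y$, a contradiction. Your periodicity argument in $\R^d/\Z^d$ is the same statement unpacked: the condition ``$x_a\notin\varphi_M(A_0)$'' is exactly ``$M^{y-1}\gamma\notin\Z^d$'', and the normal-form collision is exactly your $i$-periodicity forcing $M^{q-i}\gamma\in\Z^d$. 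Your packaging is arguably cleaner since it avoids the normal-form machinery.

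One small error: the reduction ``$i<0$ follows from $i>0$ by swapping $u$ and $v$'' is not right --- swapping gives $v\sim_{\varphi_M^i}u$ with the \emph{same} automorphism. What does work is that $u\sim_{\varphi_M^i}v$ via $\gamma$ iff $u^{-1}\sim_{\varphi_M^{-i}}v^{-1}$ via $\varphi_M^i(\gamma)$, costing an extra $2|i|$ in length; alternatively, your periodicity argument runs directly for $i<0$ (from $q\geq p$ one gets $M^{q+i}\gamma\in\Z^d$ with $q+i<q$, forcing $q\leq p$), so the reduction is unnecessary anyway.
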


\begin{proof}
Let $u \in A_p$, $v \in A_q$ and $x \in A_y$ with $p,q,y$ minimal such non-negative integers. Suppose they satisfy the twisted conjugacy relationship $u \varphi_M^i (x) = xv$, for some $i \neq 0$ (note that the case when $i=0$ reduces to the conjugacy length function of an abelian group).  This is equivalent to
\begin{equation}\label{eq:ab-by-cyc twisted relation}
t^{-p}u_at^pt^{i-y}x_at^{y-i}=t^{-y}x_at^yt^{-q}v_at^q
\end{equation}
where $u=t^{-p}u_at^p,v=t^{-q}v_at^q$ and $x=t^{-y}x_at^y$. Let $u_a = a^{u_1}\ldots a_d^{u_d}$, $v_a = a^{v_1}\ldots a_d^{v_d}$ and $x_a = a^{x_1}\ldots a_d^{x_d}$. We will let $\mathbf{u}=(u_1,\ldots ,u_d)\transpose$, $\mathbf{v}=(v_1,\ldots ,v_d)\transpose$ and $\mathbf{x}=(x_1,\ldots ,x_d)\transpose$. We can rewrite equation \eqref{eq:ab-by-cyc twisted relation} using vectors:
		\[M^{-p}\mathbf{u}+M^{i-y}\mathbf{x} = M^{-y}\mathbf{x}+M^{-q}\mathbf{v}.\]
Using the fact that $1$ is not an eigenvalue of $M$ we can rearrange this to give
		\[\mathbf{x}=\big(M^i-1\big)^{-1}\big(M^{y-q}\mathbf{v}-M^{y-p}\mathbf{u}\big).\]
Making use of the fact that all eigenvalues of $M$ have absolute value greater than $1$, we therefore get that $\modulus{x_a}_{A_0} \preceq \exp(y+\modulus{i})$, which, by Lemma \ref{lem:ab-by-cyc distortion}, implies that $\modulus{x}\preceq y+\modulus{i}$. Hence we need to obtain an upper bound on $y$.

We claim that $y \leq n+i$. If this was not true then in particular $y > q$ and $y>p+i$. We can rearrange equation \eqref{eq:ab-by-cyc twisted relation} by shuffling the terms in $t$ that appear in the middle of each side to one end:
		\[t^{i-y}\varphi_M^{y-p-i}(u_a)x_at^{y-i}=t^{-y}x_a\varphi_M^{y-q}(v_a)t^y.\]
Note that both sides of this equation are in normal form, as described in \eqref{eq:ab-by-cyc normal form}. Indeed, we can see this as follows: since $y-p-i>0$ and $y-q>0$, both $\varphi_M^{y-p-i}(u_a)$ and $\varphi_M^{y-q}(v_a)$ lie in the image $\varphi_M(A_0)$. However, $y>0$ implies that $x_a \notin \varphi_M(A_0)$, implying that neither $\varphi_M^{y-p-i}(u_a)x_a$ nor $x_a\varphi_M^{y-q}(v_a)$ can lie in $\varphi_M(A_0)$. Hence both sides of this equation are in normal form. But this is only possible if $y=y-i$ and, in particular, we would have to contradict our assumption that $i\neq 0$. We can therefore deduce that $y \leq n+i$ and by the argument above we have that $\modulus{x}\preceq n+ \modulus{i}$.
\end{proof}

\subsection{Conjugacy length in $\Gamma_M$}

We now obtain the upper bound for the conjugacy length function of $\Gamma_M$ when $M$ is a diagonalisable matrix with all eigenvalues greater than $1$.

\begin{proof}[Proof of Theorem~{\ref{thm:clf of ab-by-cyc M expanding}}]
This is a straight-forward application of Corollary \ref{cor:CLF for group extension with quotient Z}. The distortion function $\delta_A^{\Gamma_M}$ is the identity map since we calculated the twisted conjugacy length function in $A$ with respect to the word metric on $\Gamma_M$. Similarly $\ldist_A^{\Gamma_M}$ is also linear, so $\invdist_A^{\Gamma_M}$ will be as well. The set $A_G^{(n)}$ of automorphisms of $A$ that we are to consider consists of those automorphisms $\varphi_x$, where $x \in \Gamma_M$ with $\modulus{x}\leq n$ and $\varphi_x(a)=xax^{-1}$ for $a \in A$. This clearly includes all the automorphisms $\varphi_M^i$ for $\modulus{i} \leq n$. What's more, if $x=t^k a$ for some $a \in A$ and $k \in \Z$, then $\varphi_x=\varphi_{t^k}$. Hence
		\[A_G^{(n)}=\{\varphi_M^i \mid \modulus{i}\leq n\}.\]
Theorem \ref{thm:clf of ab-by-cyc M expanding} now follows by feeding Propositions \ref{prop:ab-by-cyc RCLF} and \ref{prop:ab-by-cyc TCLF} into Corollary \ref{cor:CLF for group extension with quotient Z}.
\end{proof}

\section{Semidirect products $\Z^n \rtimes \Z^k$}\label{sec:Z^d rtimes Z^k}

We will now turn our attention to a class of polycyclic abelian-by-abelian groups \mbox{$\Z^d \rtimes \Z^k$,} where $d>k$. We will express an element of $\Gamma=\Z^d \rtimes \Z^k$ as a pair $(x,y)$, with $x \in \Z^d$ and $y \in \Z^k$. The action of $\Z^k$ on $\Z^d$ is via matrices in an $\R$--split torus in $\SL{d}$. That is, the semidirect product is defined by $\varphi : \Z^k \to \SL{d}$ such that the image $\varphi(\Z^k)$ consists of matrices which are simultaneously diagonalisable over $\R$, all of whose eigenvalues are positive. Thus we can choose a basis of $\R^d$ which consists of common eigenvectors for the matrices in the image $\varphi(\Z^k)$.

Throughout we will let $\norm{.}$ denote the $\ell^1$ norm on either $\Z^d$ or $\Z^k$. We begin by giving a method to relate the size of a component of $u \in \Z^d$, with respect to one of the eigenvectors, to the size of $\norm{u}$.

\begin{lemma}\label{lem:log component is bounded}
Let $u=(u_1,\ldots ,u_d) \in \R^d$, with the coordinates given with respect to a basis of eigenvectors of the matrices in $\varphi(\Z^k)$. Then there exists a constant $\alpha_\varphi$ such that for each $i$ such that $u_i \neq 0$
		\[\modulus{\log (\modulus{u_i})} \leq \alpha_\varphi \log \norm{u}.\]
\end{lemma}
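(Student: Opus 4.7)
The plan is to bound $|u_i|$ from above and below by powers of $\norm{u}$ and then take logarithms. The upper bound is immediate from the equivalence of norms on $\R^d$; the lower bound on nonzero $|u_i|$ is the crux and must rely on a Diophantine argument. As literally stated the lemma fails for arbitrary real $u$ (take $u = v_1 + \varepsilon v_2$ with $\varepsilon\to 0$, where $v_1, v_2$ are distinct eigenvectors, so that $|u_2|\to 0$ while $\norm{u}$ stays bounded); I therefore read it as applying to $u$ with integer standard coordinates, i.e.\ $u \in \Z^d \subset \R^d$, which is the only case needed in the subsequent applications to elements of $\Gamma = \Z^d \rtimes \Z^k$.

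Let $P$ be the matrix whose columns form a common eigenbasis $v_1,\ldots,v_d$ for the matrices in $\varphi(\Z^k)$, so that $(u_1,\ldots,u_d)\transpose = P^{-1}u$ expresses the eigencoordinates as a fixed linear form in the standard coordinates of $u$. Equivalence of norms on $\R^d$ yields $|u_i|\leq \norm{P^{-1}}_\infty \norm{u}$, with $\norm{P^{-1}}_\infty$ depending only on $\varphi$. Taking logs gives $\log|u_i| \leq \log\norm{u} + O_\varphi(1)$.

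For the lower bound, observe that the matrices in $\varphi(\Z^k)\subset \SL{d}$ have integer entries, so a common eigenbasis can be chosen within a number field $K/\Q$ of some finite degree $D$ (for instance the splitting field of the characteristic polynomial of some matrix in $\varphi(\Z^k)$ with simple spectrum). Hence the entries of $P^{-1}$, and therefore each $u_i$, lie in $K$. I then apply a Liouville-type inequality: because $u_i$ has denominator bounded in $\mathcal{O}_K$ in terms of $\varphi$ only, the product of Galois conjugates satisfies $\prod_{\sigma}|\sigma(u_i)| \geq c > 0$ for every nonzero $u_i$, where $\sigma$ ranges over the embeddings $K\hookrightarrow \C$. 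Each individual conjugate obeys $|\sigma(u_i)|\leq C_\sigma \norm{u}$ by the norm-equivalence argument applied to $\sigma(P^{-1})$. Combining gives $c \leq |u_i|\cdot C\norm{u}^{D-1}$, so $\log|u_i| \geq -(D-1)\log\norm{u} - O_\varphi(1)$.

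Putting the two estimates together yields $|\log|u_i|| \leq (D-1)\log\norm{u} + O_\varphi(1)$, which is absorbed into $\alpha_\varphi \log\norm{u}$ by choosing $\alpha_\varphi$ slightly larger than $D-1$; the additive constant is harmless because $\norm{u}\geq 1$ for nonzero $u \in \Z^d$, and the finitely many small values of $\norm{u}$ can be accommodated by enlarging $\alpha_\varphi$. The main obstacle is the Diophantine lower bound: once the number field $K$ is identified the estimate itself is classical, but one must take care that all constants depend only on $\varphi$ and not on $u$.
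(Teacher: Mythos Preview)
Your proof is correct and, like the paper, you rightly identify that the statement only makes sense for $u\in\Z^d$ (the paper also tacitly assumes this). The upper bound is handled identically in both---norm equivalence/angles between eigenspaces.

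The genuine difference lies in the lower bound on a nonzero eigencoordinate. The paper expresses $|u_i|$ as the distance from the integer point $u$ to the hyperplane spanned by the other eigenvectors, notes this hyperplane is the zero set of a linear form with algebraic coefficients, and invokes Schmidt's subspace theorem to obtain $|u_i|\geq C\norm{u}^{-(d-1+\varepsilon)}$. You instead observe that the change-of-basis matrix $P^{-1}$ has entries in a number field $K$, so each $u_i$ is an element of $K$ with bounded denominator, and a Liouville/product-formula argument gives $|u_i|\geq c\norm{u}^{-(D-1)}$ with $D=[K:\Q]$. Your route is substantially more elementary---the subspace theorem is a deep result, whereas the Liouville bound is classical---at the cost of a potentially worse exponent ($D-1$ can exceed $d-1$), which is irrelevant since the lemma only asks for \emph{some} $\alpha_\varphi$. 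One small wrinkle: your parenthetical ``the splitting field of the characteristic polynomial of some matrix in $\varphi(\Z^k)$ with simple spectrum'' presumes such a matrix exists; if the common eigenspaces are not all one-dimensional it need not, but one can simply take the compositum of the splitting fields of a generating set $\varphi_1,\ldots,\varphi_k$, and the rest of your argument goes through unchanged.
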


\begin{proof}
We will prove the Lemma for $i=1$. Let $E_1, \ldots , E_d$ be the one-dimensional spaces spanned by each eigenvector for the matrices in $\varphi(\Z^k)$. Then $\modulus{u_1}$ corresponds to the distance from $u$ to the hyperplane $E_2 \oplus \ldots \oplus E_d$ in a direction parallel to $E_1$. In order to obtain a lower bound on $\log \modulus{u_1}$ we need to find a lower bound on the distance from $u$ to $E_2 \oplus \ldots \oplus E_d$. This lower bound follows from the subspace theorem of Schmidt \cite{Schm72}. Since $E_2, \ldots , E_m$ are eigenspaces for $\varphi(y) \in \SL{d}$, there exists algebraic numbers $\alpha_1,\ldots,\alpha_d$ such that $E_2 \oplus \ldots \oplus E_d= \{ x \in \R^d \mid x \cdot (\alpha_1, \ldots , \alpha_d)=0 \}$. Then, for $u \in \Z^d$, we have
		\[d(u,E_2\oplus \ldots \oplus E_d)=\frac{\modulus{u\cdot(\alpha_1,\dots,\alpha_d)}}{\norm{(\alpha_1,\ldots,\alpha_d)}}\textrm{.}\]
Thus, by the subspace theorem, for every $\varepsilon > 0$ there exists a positive constant $C$ such that for every $u \in \Z^d$ we have the following bound on the distance to the hyperplane:
		\[d(u,E_2\oplus \ldots \oplus E_d) \geq \frac{C}{\norm{u}^{d-1+\varepsilon}}\textrm{.}\]
In particular this gives us a lower bound on $\modulus{u_1}$ and hence 
		\[-\log \modulus{u_1} \leq (d-1+\varepsilon)\log \norm{u} - \log C\textrm{.}\]

By a trigonometric argument we can obtain an upper bound on $\log \modulus{u_1}$ which will depend on the angles between the eigenspaces. Hence, combining this with the lower bound, there exists a positive constant $\alpha_\varphi$, determined by $d$, $\varepsilon$ and $\varphi$, such that
		$\modulus{\log(\modulus{u_1})} \leq \alpha_\varphi \log \norm{u}.$
\end{proof}

It is important to understand the distortion of the $\Z^d$ component in $\Gamma$. The following two lemmas give us a handle on this. The second, Lemma \ref{lem:upper bound - distortion in Z^d rtimes Z^r}, gives an exponential upper bound for the distortion function while the first, Lemma \ref{lem:lower bound - distortion in Z^d rtimes Z^k}, shows that for any element of the form $(x,0)$ in $\Gamma$ we can take a significant shortcut to get to $x$ from $0$ by using the action of one of the matrices in $\varphi(\Z^k)$.

\begin{lemma}\label{lem:lower bound - distortion in Z^d rtimes Z^k}
Suppose that $\varphi(\Z^k)$ contains a matrix whose eigenvalues are all distinct from $1$. Then there exists a constant $A_\varphi >0$ such that for every $x \in \Z^d$
		\[\modulus{(x,0)}_\Gamma \leq A_\varphi \log \norm{x}.\]
In particular the inverse subgroup distortion function is bounded above by a logarithm:
		\[\invdist_{\Z^d}^\Gamma(n)\leq A_\varphi \log(n).\]
\end{lemma}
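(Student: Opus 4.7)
The plan is to reduce to the case of a cyclic extension and then exploit that $\Gamma$ contains a cocompact lattice in a solvable Lie group whose geometry forces logarithmic distortion of its $\Z^d$-subgroup.

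First, pick $y_0\in\Z^k$ with $M := \varphi(y_0)$ having all eigenvalues distinct from $1$, and let $\Gamma' := \Z^d \rtimes_M \langle y_0\rangle \leq \Gamma$. A generating set for $\Gamma'$ sits inside a generating set for $\Gamma$, so $\modulus{(x,0)}_\Gamma \leq \modulus{(x,0)}_{\Gamma'}$, and it suffices to prove the bound inside $\Gamma'$. All matrices in $\varphi(\Z^k)$ have positive real eigenvalues, so $M$ decomposes $\R^d = V^+\oplus V^-$ into expanding ($\lambda_i>1$) and contracting ($\lambda_i<1$) subspaces, with projections $P^\pm$; write $\lambda>1$ for the smallest eigenvalue of $M$ on $V^+$ and $\mu\in(0,1)$ for the largest on $V^-$.

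The geometric step is to view $\Gamma'$ as a cocompact lattice in the solvable Lie group $G := \R^d \rtimes_M \R$, where $s\in\R$ acts via $M^s := \exp(s\log M)$; a fundamental domain is $\{(M^s y, s):s\in[0,1),\ y\in[0,1)^d\}$. By Milnor--Svarc the word metric on $\Gamma'$ is quasi-isometric to a left-invariant Riemannian metric $d_G$ on $G$. The key feature of $d_G$ is that a horizontal translation by $v\in\R^d$ applied at height $s$ has length comparable to $\norm{M^{-s}v}$: for $v\in V^+$ this decays like $\lambda^{-s}$ as $s$ grows, while for $v\in V^-$ it decays like $\mu^{\abs{s}}$ as $s$ becomes negative. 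I will bound $d_G(e,(x,0))$ from above by the piecewise-geodesic path that rises to height $h_+ := \lceil \log\norm{P^+x}/\log\lambda\rceil$, translates horizontally by $P^+x$ (cost $\leq C\lambda^{-h_+}\norm{P^+x} = O(1)$), returns to height $0$, descends to height $-h_- := -\lceil\log\norm{P^-x}/\abs{\log\mu}\rceil$, translates by $P^-x$ (cost $O(1)$), and returns. The vertical portions contribute $2h_+ + 2h_- = O(\log\norm{x})$, so $d_G(e,(x,0)) = O(\log\norm{x})$. Combining with Milnor--Svarc yields $\modulus{(x,0)}_{\Gamma'}\leq A_\varphi \log\norm{x}$ for some $A_\varphi$ depending only on $\varphi$, and the ``in particular'' clause follows from Lemma \ref{lem:invdist is minimal} applied with $\tau(n) = A_\varphi\log n$.

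The main obstacle will be the passage from the smooth Lie-group path to a discrete word in $\Gamma'$ of comparable length. Milnor--Svarc handles this uniformly, but only via implicit constants; a more hands-on alternative is to lift the path to a word of schematic form $t^{h_+}(y_+,0)t^{-h_+-h_-}(y_-,0)t^{h_-}$ with $y_\pm \in \Z^d$ bounded integer approximants of $M^{-h_+}P^+ x$ and $M^{h_-} P^- x$. Since $M\in\SLdZ$, such approximants lie automatically in $\Z^d$; the subtle point is controlling the transverse-eigenspace error when pushed through $M^{\pm h_\pm}$, which requires first splitting $x = x_+ + x_-$ into $V^\pm$--adapted integer pieces before conjugating by $t^{\pm h_\pm}$.
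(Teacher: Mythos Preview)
Your proof is correct and takes a genuinely different route from the paper's. Both arguments embed $\Gamma$ (or your cyclic subgroup $\Gamma'$) as a cocompact lattice in the associated solvable Lie group and bound the Riemannian distance $d_G(e,(x,0))$ by an explicit path, invoking Milnor--\v{S}varc to transfer the bound back to the word metric. The difference is in how that path is organised. The paper moves \emph{one eigenvector at a time}: for each $i$ it rises to height $\alpha_i y$ with $\lambda_i^{\alpha_i}=x_i$ and translates by the unit eigenvector $\tilde e_i$, so the total vertical travel is $\sum_i|\alpha_i|=\sum_i|\log x_i|/|\log\lambda_i|$. Since the eigen-coordinates $x_i$ of an integer vector can be extremely small, bounding $|\log|x_i||$ by $O(\log\|x\|)$ is nontrivial and is exactly what Lemma~\ref{lem:log component is bounded} (via Schmidt's subspace theorem) provides. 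Your two-step path, by contrast, lumps all expanding directions into a single horizontal move at height $h_+\sim\log\|P^+x\|$ and all contracting ones at height $-h_-\sim\log\|P^-x\|$; since $\|P^\pm x\|\le C\|x\|$ by mere boundedness of the projections, you get $h_\pm=O(\log\|x\|)$ without any Diophantine input. So your argument is more elementary and bypasses the subspace theorem entirely, at the cost of slightly less explicit constants. One minor point: the inequality $|(x,0)|_\Gamma\le|(x,0)|_{\Gamma'}$ should be $|(x,0)|_\Gamma\le C\,|(x,0)|_{\Gamma'}$, since $y_0$ need not itself be a generator of $\Gamma$; and your final ``hands-on'' paragraph is indeed delicate (the integer splitting $x=x_++x_-$ adapted to $V^\pm$ does not exist in general), but it is unnecessary once you have invoked Milnor--\v{S}varc.
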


\begin{proof}
The bound on $\overline{\mathfrak{d}}_{\Z^d}^\Gamma$ is an application of Lemma \ref{lem:invdist is minimal}, using the bound on $\modulus{(x,0)}_\Gamma$ found in the first part of the Lemma. To obtain this bound, we will use that fact that $\Gamma = \Z^d \rtimes_\varphi \Z^k$ is a uniform lattice in $G=\R^d \rtimes_\varphi \R^k$, finding an upper bound for $\modulus{(x,0)}_G$ and hence also for $\modulus{(x,0)}_\Gamma$. Here, by $\modulus{(x,0)}_G$ we mean the distance $d_G(1,(x,0))$ where $d_G$ is a left-invariant Riemannian metric on $G$.

First, let $y$ be a minimal length vector in $\Z^k$ with the property that $\varphi(y)$ has no eigenvalues equal to $1$.  Write $x$ in coordinates $(x_1,\ldots ,x_d)$ with respect to a basis of eigenvectors for $\varphi(y)$, which have been chosen so that $x_i \geq 0$ for each $i$. Let $\tilde{e}_i$ denote the $i$--th eigenvector, in this notation it will have $0$'s everywhere except in the $i$--th coordinate where we put a $1$. Let $\lambda_i$ be the eigenvalue of $\varphi(y)$ corresponding to $\tilde{e}_i$. For $i=1,\ldots,d$ let $\alpha_i \in \R$ be such that $\lambda_i^{\alpha_i}=x_i$. Then
		\[(x,0)=(0,\alpha_1y)(\tilde{e}_1,\alpha_2y-\alpha_1y)(\tilde{e}_2,\alpha_3y-\alpha_2y)\ldots(\tilde{e}_d,-\alpha_dy).\]
Calculating the length of each term in the product gives us an upper bound on $\modulus{(x,0)}_G$ as
\begin{eqnarray*}
\modulus{(x,0)}_G &\leq& \modulus{\alpha_1}\norm{y}+1+\modulus{\alpha_2-\alpha_1}\norm{y}+1+\modulus{\alpha_3-\alpha_2}\norm{y}+\ldots+1+\modulus{\alpha_d}\norm{y}\\
					&\leq& d + 2\norm{y}\sum_{i=1}^d\modulus{\alpha_i}.
\end{eqnarray*}
But $\alpha_i=\frac{\log(x_i)}{\log(\lambda_i)}$, so by applying Lemma \ref{lem:log component is bounded} we get $\modulus{\alpha_i}\leq \alpha_\varphi \frac{\log \norm{x}}{\modulus{\log(\lambda_i)}}$. Hence
		\[\modulus{(x,0)}_G \leq d + 2\norm{y}\alpha_\varphi\log \norm{x}\sum_{i=1}^d\frac{1}{\modulus{\log (\lambda_i)}}.\]
Combining this with the fact that $\Gamma$ is a uniform lattice in $G$ gives us the existence of the constant $A_\varphi$ given in the Lemma.
\end{proof}

\begin{lemma}\label{lem:upper bound - distortion in Z^d rtimes Z^r}
Suppose the image $\varphi(\Z^k)$ is generated by matrices $\varphi_1 , \ldots , \varphi_k$ and that $\lambda$ is the largest eigenvalue of any of the matrices $\varphi_1,\varphi_1^{-1},\ldots ,\varphi_k,\varphi_k^{-1}$.  Let $x \in \Z^d$. Then
		\[\norm{x} \leq \modulus{(x,0)}_\Gamma\left(\lambda^{\modulus{(x,0)}_\Gamma}+1\right).\]
In particular the subgroup distortion function is bounded above by an exponential:
		\[\delta_{\Z^d}^\Gamma(n) \leq n(\lambda^n+1).\]
\end{lemma}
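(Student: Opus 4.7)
The plan is to expand $(x,0)$ as a product of $n := \modulus{(x,0)}_\Gamma$ generators of $\Gamma$ and then read off the $\Z^d$-component using the semidirect product multiplication rule. Concretely, write $(x,0) = g_1 g_2 \cdots g_n$ where each $g_i = (x_i, y_i)$ is one of the standard generators: either $x_i = \pm e_j$ and $y_i = 0$, or $x_i = 0$ and $y_i = \pm f_l$. Iterating the identity $(a,b)(c,d) = (a + \varphi(b)c,\, b+d)$ telescopes to
\[
(x,0) = \left( \sum_{i=1}^n \varphi(Y_i)\, x_i,\ \sum_{i=1}^n y_i \right), \qquad Y_i := y_1 + \cdots + y_{i-1},
\]
so $x = \sum_{i=1}^n \varphi(Y_i) x_i$ and it suffices to bound $\norm{\varphi(Y_i) x_i}$ for each $i$ at which $x_i \ne 0$.

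For every $i \leq n$ we have $\norm{Y_i} \leq i-1 \leq n-1$, so $\varphi(Y_i)$ is a product of at most $n-1$ factors drawn from $\{\varphi_1^{\pm 1},\ldots,\varphi_k^{\pm 1}\}$. Since these matrices lie in a common $\R$--split torus they are simultaneously diagonalisable, and every eigenvalue of $\varphi(Y_i)$ is a product of at most $n-1$ eigenvalues of the generators $\varphi_l^{\pm 1}$, each of absolute value at most $\lambda$. Hence every eigenvalue of $\varphi(Y_i)$ has absolute value at most $\lambda^{n-1}$, and working in the common eigenbasis gives $\norm{\varphi(Y_i) x_i} \leq \lambda^{n-1}$ for each standard basis vector $x_i$ (up to a fixed factor coming from the change of basis between the standard $\ell^1$ norm and the eigenbasis, which is absorbed by the generous $\lambda^n + 1$ on the right-hand side of the bound).

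Summing over the at most $n$ indices at which $x_i \neq 0$ gives $\norm{x} \leq n \lambda^{n-1}$, from which the stated inequality $\norm{x} \leq \modulus{(x,0)}_\Gamma\bigl(\lambda^{\modulus{(x,0)}_\Gamma} + 1\bigr)$ is immediate. Maximising over $x$ with $\modulus{(x,0)}_\Gamma \leq n$ then yields $\delta_{\Z^d}^\Gamma(n) \leq n(\lambda^n + 1)$. The only technical point is transferring the eigenvalue bound on $\varphi(Y_i)$ to an $\ell^1$-norm bound on $\varphi(Y_i) x_i$; this is where simultaneous $\R$-diagonalisability of $\varphi(\Z^k)$ is essential, as it produces a single basis change that handles all the $Y_i$ uniformly.
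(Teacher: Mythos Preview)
Your argument is essentially the paper's: write $(x,0)$ as a geodesic word in the generators, expand via the semidirect-product rule to obtain $x = \sum_i \varphi(Y_i)x_i$, and bound each summand using the eigenvalue estimate $\lambda^{\norm{Y_i}}$; the paper merely groups consecutive generators of the same type before expanding, which is cosmetic. Your remark about the change-of-basis constant is honest---the paper silently elides the same issue---but your claim that it is ``absorbed by the generous $\lambda^n+1$'' is not literally correct (the constant need not be $\leq \lambda$); neither argument yields the exact stated inequality, though both give the exponential upper bound $\delta_{\Z^d}^\Gamma(n) \preceq \lambda^n$, which is all that is used downstream.
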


\begin{proof}
The generators of $\Gamma$ are taken to be the set of elements of the form either $(e_i,0)$ or $(0,e_j)$ where $e_i \in \Z^d$, $e_j \in \Z^k$ are elements of the standard bases. We can write $(x,0)$ as a geodesic word on these generators, grouping together the generators of the form $(e_i,0)$ and the generators of the form $(0,e_j)$:
\begin{eqnarray*}
(x,0)&=&(\alpha_1,0)(0,\beta_1)(\alpha_2,0)\ldots(\alpha_r,0)(0,\beta_r)\\
	 &=&(\alpha_1,\beta_1)(\alpha_2,\beta_2)\ldots(\alpha_r,\beta_r)
\end{eqnarray*}
where $\alpha_i$ and $\beta_i$ are non-zero for all $1\leq i \leq r$, except possibly for $\alpha_1$ and $\beta_r$. First note that $\modulus{(x,0)}_\Gamma=\sum_{i=1}^r(\norm{\alpha_i}+\norm{\beta_i})$. We also obtain the following expression for $x$:
		\[x=\alpha_1 + \varphi(\beta_1)(\alpha_2) + \varphi(\beta_1+\beta_2)(\alpha_3)+\ldots+\varphi(\beta_1+\ldots+\beta_{r-1})(\alpha_r).\]
Since $\norm{\beta_1+\ldots+\beta_i} \leq \modulus{(x,0)}_\Gamma$ for each $i$, we get an upper bound on the norm of $x$:
		\[\norm{x}\leq \norm{\alpha_1} + \lambda^{\modulus{(x,0)}_\Gamma}\left(\norm{\alpha_2}+\ldots+\norm{\alpha_r}\right)\leq \modulus{(x,0)}_\Gamma+\lambda^{\modulus{(x,0)}_\Gamma}\modulus{(x,0)}_\Gamma .\]
Thus the lemma is proved.
\end{proof}

We now give an upper bound on the conjugacy length function of $\Z^d \rtimes \Z^k$. We will see that when $k=1$ we can produce a linear upper bound, but when $k>1$ we have to settle for exponential. The main obstacle that prevents us from finding a better than exponential upper bound is the nature of the projection of centralisers of elements into the $\Z^k$ coordinate. In the language of Theorem \ref{thm:group extensions}, this is the quantity measured by the function $\rho$.

\begin{thm}\label{thm:generalised SOL}
Let $\Gamma = \Z^d \rtimes_\varphi \Z^k$, where the image of $\varphi : \Z^k \hookrightarrow \SLdZ$ is contained in an $\R$--split torus $T$. Then there exist constants $A>1$ and $B>0$ such that
\begin{enumerate}[label={({\arabic*})}]
\item if $k=1$ then $\CLF_\Gamma(n) \leq Bn$;
\item if $k>1$ then $\CLF_\Gamma(n) \leq A^n$.
\end{enumerate}
\end{thm}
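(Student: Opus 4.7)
The plan is to model the proof on Theorem \ref{thm:group extensions} applied to the split short exact sequence $1 \to \Z^d \to \Gamma \to \Z^k \to 1$, with $F = \Z^d$ equipped with the metric induced from $\Gamma$. For $k = 1$ the setup fits directly into Corollary \ref{cor:CLF for group extension with quotient Z}. For $k > 1$ condition (c$'$) of Theorem \ref{thm:group extensions} can genuinely fail: when $\varphi(y)$ has $1$ as an eigenvalue the subgroup $\beta(Z_\Gamma(u))$ can have infinite index in $Z_H(\beta(u)) = \Z^k$, so I will instead reopen the proof of Case~2 of Theorem \ref{thm:group extensions} and bound a conjugator's $\beta$-image directly, bypassing $\rho_n$.

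The four recurring ingredients are collected as follows. Lemmas \ref{lem:upper bound - distortion in Z^d rtimes Z^r} and \ref{lem:lower bound - distortion in Z^d rtimes Z^k} provide $\delta_{\Z^d}^\Gamma(n) \preceq n\lambda^n$ and $\invdist_{\Z^d}^\Gamma(n) \preceq \log n$, where $\lambda$ is the largest eigenvalue occurring among $\varphi(e_j)^{\pm 1}$. Every automorphism in $A_\Gamma^{(n)}$ has the form $\varphi(y)$ with $\modulus{y} \leq n$, so its eigenvalues are bounded by $\lambda^n$, and Example \ref{example:TCL for abelian} applied separately on the $1$-eigenspace (where the twisted equation forces equality of components) and on its complement yields $\TCLF_{\Z^d}(m; A_\Gamma^{(n)}) \preceq \lambda^n m$. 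For the restricted conjugacy length function, two elements $x_1, x_2 \in \Z^d$ are conjugate in $\Gamma$ iff some $b \in \Z^k$ satisfies $\varphi(b) x_1 = x_2$; in the common eigenvector basis this reads $\lambda_i(b) x_1^{(i)} = x_2^{(i)}$, and Lemma \ref{lem:log component is bounded} combined with the upper distortion bound gives $\modulus{\log \lambda_i(b)} \preceq \modulus{x_1} + \modulus{x_2}$. Injectivity of $\varphi$ makes the character matrix $(\log \lambda_i(e_j))_{i,j}$ of full column rank (after restricting to indices where $x_1$ has nonzero component), so a linear-algebra inversion yields $\modulus{b} \preceq \modulus{x_1} + \modulus{x_2}$, whence $\RCLF_{\Z^d}^\Gamma(n) \preceq n$.

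For part (1) I plug these into Corollary \ref{cor:CLF for group extension with quotient Z}: one computes $\TCLF_{\Z^d}(2\delta_{\Z^d}^\Gamma(2n); A_\Gamma^{(n)}) \preceq \lambda^{3n}$, which after the logarithmic inverse distortion $\invdist_{\Z^d}^\Gamma$ collapses to $O(n)$, yielding the linear bound.

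For part (2) the main work is to bound, for each conjugate pair $u = (x, y)$ and $v = (x', y')$ of total size at most $n$, the smallest element $b \in \Z^k$ that arises as $\beta(g)$ for some conjugator $g$. After applying $\CLF_H$ to reduce to $y = y'$, conjugates of $u$ have the explicit form $((I - \varphi(y)) a + \varphi(b) x, y)$, so the admissible $b$'s form a coset of $\beta(Z_\Gamma(u)) = \{b \in \Z^k : (I - \varphi(b)) x \in \mathrm{Im}_\Z(I - \varphi(y))\}$. When $\det(I - \varphi(y)) \neq 0$, the quotient $\Z^d / \mathrm{Im}_\Z(I - \varphi(y))$ is finite of order $\modulus{\det(I - \varphi(y))} \leq (1 + \lambda^n)^d$, so $\beta(Z_\Gamma(u))$ is the kernel of the induced $1$-cocycle homomorphism $\Z^k \to$ this quotient, with index and fundamental-domain diameter $\preceq \lambda^{dn}$. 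The hard part is when $\det(I - \varphi(y)) = 0$, where (c$'$) can fail: in each eigenvector direction $i$ with $\lambda_i(y) = 1$ and $x^{(i)} \neq 0$ the scalar $\lambda_i(b)$ is pinned to $x'^{(i)}/x^{(i)}$, so Lemma \ref{lem:log component is bounded} and the upper distortion force the component of $b$ transverse to the $\R$-span of $\beta(Z_\Gamma(u))$ to be linearly bounded in $n$, while the component inside that span is reduced into a fundamental domain via the finite-index argument applied to the non-fixed eigenspaces, giving an exponential bound overall. In either case an optimal conjugator's $\beta$-image has size $\preceq \lambda^{dn}$. Substituting this bound in place of $\rho_n$ in the Theorem \ref{thm:group extensions} template, the argument to $\delta_{\Z^d}^\Gamma$ is singly exponential, producing a doubly exponential input to $\TCLF_{\Z^d}$; the logarithmic inverse distortion then absorbs one exponential, leaving a single exponential bound $\CLF_\Gamma(n) \leq A^n$.
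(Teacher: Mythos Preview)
Your approach mirrors the paper's: both feed the extension $1\to\Z^d\to\Gamma\to\Z^k\to1$ into Theorem~\ref{thm:group extensions} (Corollary~\ref{cor:CLF for group extension with quotient Z} when $k=1$), using the same four ingredients --- a linear bound on $\RCLF_{\Z^d}^\Gamma$ via Lemma~\ref{lem:log component is bounded} and a full-rank minor of the log-eigenvalue matrix, the $\TCLF_{\Z^d}$ estimate from Example~\ref{example:TCL for abelian}, and the distortion bounds of Lemmas~\ref{lem:lower bound - distortion in Z^d rtimes Z^k} and~\ref{lem:upper bound - distortion in Z^d rtimes Z^r}. For part~(1) the two arguments are essentially identical.

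The one substantive divergence is in the centraliser estimate for $k>1$. The paper's Step~3 writes $a=(\Id-\varphi(v))^{-1}(\Id-\varphi(b))u$ and argues that $\Z^d/(\Id-\varphi(v))\Z^d$ has order $c=\modulus{\det(\Id-\varphi(v))}\leq(1+\lambda^{\norm{v}})^d$, so each standard generator of $\Z^k$ has a multiple of size at most $c$ lying in $\beta(Z_\Gamma(u,v))$; this gives $\rho_n\preceq\lambda^{dn}$ and Theorem~\ref{thm:group extensions} applies directly. You correctly observe that this tacitly assumes $\det(\Id-\varphi(v))\neq0$, and that for general $\varphi$ (e.g.\ built from block-diagonal hyperbolic matrices in $\SL{2}\times\SL{2}\subset\SL{4}$) condition~(c$'$) can genuinely fail. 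Your workaround --- pinning $\lambda_i(b)$ on $1$-eigendirections with $x^{(i)}\neq0$ via Lemma~\ref{lem:log component is bounded}, then running the finite-index argument on the complementary eigenspaces --- is a genuine addition beyond what the paper writes. The idea is sound but the sketch is not airtight: the eigenspace decomposition need not be rational, so the ``finite-index argument applied to the non-fixed eigenspaces'' and the passage from real constraints on $\lambda_i(b)$ back to an \emph{integer} $b\in\Z^k$ with controlled norm both need more care than you give them. One cleaner route is to work with the Smith normal form of $\Id-\varphi(y)$, splitting $\Z^d/(\Id-\varphi(y))\Z^d$ into its free and torsion parts and handling the crossed-homomorphism $b\mapsto(\Id-\varphi(b))x$ modulo each.
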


\begin{proof}
We will apply Theorem \ref{thm:group extensions}. Since $\Gamma$ is abelian-by-abelian we need to find bounds only for the values of $\RCLF_{\Z^d}^{\Gamma}(n)$, $\TCLF_{\Z^d}(n;\varphi)$ and $\rho(u,v)$, for $(u,v) \in \Gamma$, where $\rho$ is the function as defined in Theorem \ref{thm:group extensions}.

\vspace{2mm}
\noindent \underline{\textsc{Step} 1:} Estimating $\RCLF_{\Z^d}^\Gamma(n)$.
\vspace{2mm}

\noindent Consider $(u,0),(w,0) \in \Z^d$. Then $(u,0)(x,y)=(x,y)(w,0)$ if and only if $u=\varphi(y)(w)$. Note that we can immediately set $x$ to be zero.

Suppose $\varphi(\Z^k)$ is generated by matrices $\varphi_1 , \ldots , \varphi_k$, so that if $y = (y_1,\ldots,y_k)$ then
		\[\varphi(y)=\varphi_{1}^{y_1}\ldots \varphi_{k}^{y_k}\textrm{.}\]
Fix a basis of eigenvectors of the matrices in $T$. With respect to this basis, let $u,v$ be represented with coordinates $(u_1,\ldots,u_d),(w_1,\ldots,w_d)$ respectively. Suppose the eigenvalues of $\varphi_i$ are $\lambda_{j,i}$ for $j = 1, \ldots , d$ and $i=1,\ldots , k$. Then from $u=\varphi(y)(w)$ we get the following system:
		\[u_j= \left( \prod_{i=1}^{k}\lambda_{j,i}^{y_i} \right)w_j\textrm{.}\]
By taking logarithms we see that this system is equivalent to the matrix equation $L{y}={a}$, where $L$ is the $d \times k$ matrix with $(r,s)$--entry equal to $\log \modulus{\lambda_{r,s}}$ and ${a}$ is the vector with $j^{\mathrm{th}}$ entry equal to $\log \modulus{u_j} - \log \modulus{w_j}$. Since the matrices $\varphi_1 , \ldots , \varphi_k$ generate a copy of $\Z^k$, the columns of $L$ are linearly independent. Hence we may take a non-singular $k \times k$ minor $L'$ and we get a matrix equation $L' {y} = {a}'$. By Cramer's Rule, for each $i = 1,\ldots,k$ we have 
		\[y_i = \frac{\det (L^{(i)})}{\det (L')}\]
where $L^{(i)}$ is obtained from $L'$ by replacing the $i^\textrm{th}$ column with ${a}'$. Hence $\modulus{y_i}$ is bounded by a linear expression in the terms $\modulus{\log(\modulus{u_j})}+\modulus{\log(\modulus{w_j})}$, for $j = 1 , \ldots , k$, and the coefficients are determined by the choice of $\varphi$. Therefore, by Lemma \ref{lem:log component is bounded}, we obtain an upper bound for each $\modulus{y_i}$ as linear sum of $\log(\modulus{u})$ and $\log(\modulus{w})$.

To reach an upper bound for the restricted conjugacy length function, we observe that we are able to find $y \in \Z^k$ such that $(u,0)(0,y)=(0,y)(w,0)$ and $\norm{y} \leq B_1 (\log \norm{u} +\log \norm{w})$ for some constant $B_1>0$, determined by $\varphi$ and independent of $u,w$. Furthermore, because the first coordinate in $\Gamma$ is exponentially distorted, in particular by using Lemma \ref{lem:upper bound - distortion in Z^d rtimes Z^r}, we indeed have a linear upper bound on conjugator length:
		\[\modulus{(0,y)}_\Gamma \leq \norm{y} \leq B_2n\]
for some $B_2>0$ independent of $u,w$, and where $n=\modulus{(u,0)}_\Gamma + \modulus{(w,0)}_\Gamma$. Thus the restricted conjugacy length function is at most linear:
		\[\RCLF_{\Z^d}^\Gamma(n)\leq B_2n.\]

\vspace{2mm}
\noindent \underline{\textsc{Step} 2:} Estimating $\TCLF_{\Z^d}(n;\varphi(v))$.
\vspace{2mm}

\noindent This is precisely the situation dealt with in Example \ref{example:TCL for abelian}, where we take $\varphi(v)$ in place of $\varphi$. It gives us
		\[\TCLF_{\Z^d}(n;\varphi(v)) \leq (1 +\lambda_v)n\]
where $\lambda_v$ is the largest eigenvalue of $\varphi(v)$. Suppose $\lambda$ is the largest eigenvalue of any of the generating matrices $\varphi_1,\ldots,\varphi_k$ or their inverses. Then $\lambda_v \leq \lambda^{\norm{v}}$.

\vspace{2mm}
\noindent \underline{\textsc{Step} 3:} Estimating $\rho(u,v)$.
\vspace{2mm}

\noindent If $k=1$ then we are in the case of a cyclic extension, so we need only recall that $\rho(u,v)\leq n$ will suffice here. Now suppose $k>1$. Note that $(a,b)$ is in the centraliser $Z_\Gamma(u,v)$ if and only if 
		\[a=(\Id-\varphi (v))^{-1}(\Id -  \varphi (b))u \in \Z^d\textrm{.}\]
We will show that given any $b \in \Z^k$, there exists a constant $m$ which is bounded by an exponential in $\norm{v}$ and such that $(\Id-\varphi (v))^{-1}(\Id -  \varphi (mb))u \in \Z^d$. This will give an exponential upper bound on $\rho(u,v)$.

Let $L:=(\Id - \varphi (v))\Z^d$. Denote by $c$ the absolute value of the determinant of $\Id-\varphi (v)$. Then $c$ is the index of $L$ in $\Z^d$. Since $\varphi(b)$ commutes with $\varphi(v)$, and hence $\Id-\varphi(v)$, it follows that $\varphi(b)L=L$. Let $\bar{u}$ be the image of $u$ in $\Z^d/L$. Then there exists some $m \leq c$ such that $\varphi (mb)\bar{u} = \bar{u}$. In particular $(\Id-\varphi (mb))u \in L$.

In the above, if we let $b$ be one of the canonical generators of $\Z^d$, then we see that we can control each coordinate and obtain an upper bound on $\rho(u,v)$ as $d\lambda^{d\norm{v}}$, since $\modulus{\det(\Id-\varphi(v))} \leq (1+\lambda_v)^d$,  and $\lambda_v \leq \lambda^{\norm{v}}$, with $\lambda_v$ and $\lambda$ as in step 2. Hence, in particular, $\rho(u,v) \leq d(1+\lambda^{\modulus{(u,v)}_\Gamma})^d$.

\vspace{2mm}
\noindent \underline{\textsc{Step} 4:} Estimating $\CLF_\Gamma(n)$.
\vspace{2mm}

\noindent By Theorem \ref{thm:group extensions}, we use the above bounds on $\RCLF_{\Z^d}^\Gamma(n)$, $\TCLF_{\Z^d}(n;\varphi(v))$ and $\rho(u,v)$ to obtain an upper bound on the conjugacy length function of $\Gamma$ as
		\[\CLF_\Gamma(n) \leq \max \left\{ B_2n , \rho_n + \invdist_{\Z^d}^\Gamma\big(2(1+\lambda^n){\delta_{\Z^d}^\Gamma(n+\rho_n)}\big)\right\}\]
where $\rho_n=n$ if $k=1$ or $\rho_n=d(1+\lambda^{n})^d$ if $k>1$. Using the estimates on the distortion functions obtained in Lemmas \ref{lem:lower bound - distortion in Z^d rtimes Z^k} and \ref{lem:upper bound - distortion in Z^d rtimes Z^r} we obtain a linear upper bound on the conjugacy length function for $\Gamma$ when $k=1$ and exponential otherwise.
\end{proof}

The exponential bound in Theorem \ref{thm:generalised SOL} arises because of the way the projection of $Z_\Gamma (u,v)$ onto the $\Z^k$--component lies inside $\Z^k$. In particular, one may ask if the exponential upper bound is sharp:

\begin{question}
Can one find a pair of conjugate elements in $\Gamma$ whose shortest conjugator is exponential in the sum of the lengths of the two given elements?
\end{question}

We will now discuss some applications of Theorem \ref{thm:generalised SOL} to other situations, firstly to the fundamental group of prime $3$--manifolds and secondly to Hilbert modular groups.

Let $M$ be a prime $3$--manifold with fundamental group $G$. Recent work of Behrstock and Dru\c{t}u \cite[\S 7.2]{BD11} has shown that, when $M$ is non-geometric, there exists a positive constant $K$ such that two elements $u,v$ of $G$ are conjugate only if there is a conjugator whose length is bounded above by $K(\modulus{u}+\modulus{v})^2$. Theorem \ref{thm:generalised SOL} in the case when $d=2$ and $k=1$ deals with the solmanifold case, while work of Ji, Ogle and Ramsey \cite[\S 2.1]{JOR10} gives a quadratic upper bound for nilmanifolds. These, together with the result of Behrstock and Dru\c{t}u, give the following:

\begin{thm}
Let $M$ be a prime $3$--manifold with fundamental group $G$. For each word metric on $G$ there exists a positive constant $K$ such that two elements $u,v$ are conjugate in $G$ if and only if there exists $g\in G$ such that $ug=gv$ and \[\modulus{g} \leq K(\modulus{u}+\modulus{v})^2\textrm{.}\]
\end{thm}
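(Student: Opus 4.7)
The plan is a case analysis driven by Perelman's geometrization theorem. Every prime $3$-manifold is either $S^2 \times S^1$ or irreducible, and each irreducible $3$-manifold either carries one of the eight Thurston geometries or has non-trivial JSJ decomposition. The non-geometric prime manifolds are handled directly by the quadratic bound of Behrstock and Dru\c{t}u \cite{BD11} cited in the preceding paragraph, so it remains to treat the eight geometric cases.

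Several are immediate. Spherical, $S^2 \times \R$, and Euclidean geometries give $\pi_1(M)$ finite, virtually cyclic, or virtually abelian, and hence have at most linear CLF by elementary means. Hyperbolic $3$-manifolds have word-hyperbolic $\pi_1(M)$ and so linear CLF by Bridson-Haefliger. Nilmanifolds are handled by the quadratic bound of Ji, Ogle and Ramsey \cite{JOR10}, and solmanifolds have $\pi_1(M) \cong \Z^2 \rtimes_\varphi \Z$ with a hyperbolic gluing matrix, placing them inside Theorem \ref{thm:generalised SOL} with $d=2$, $k=1$ for a linear bound. The remaining two geometries, $\mathbb{H}^2 \times \R$ and $\widetilde{\mathrm{SL}_2(\R)}$, produce Seifert fibered manifolds over a hyperbolic base orbifold, so $\pi_1(M)$ sits in a central extension
\[ 1 \longrightarrow \Z \longrightarrow \pi_1(M) \longrightarrow H \longrightarrow 1 \]
with $H$ virtually Fuchsian, hence word-hyperbolic. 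For these I would invoke the central-extension reduction noted just after Corollary \ref{cor:CLF for group extension with quotient Z}, namely $\CLF_{\pi_1(M)}(n) \leq \CLF_H(n) + \rho_n$.

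The main obstacle is controlling $\rho_n$ in this last case. Since $H$ is word-hyperbolic, $Z_H(\beta(g))$ is virtually cyclic whenever $\beta(g)$ has infinite order, generated up to finite index by some maximal element $h_0$ with $\beta(g) = h_0^m$ and $m$ of the same order of magnitude as $\modulus{g}$. Because $\beta(Z_{\pi_1(M)}(g))$ always contains $\beta(g) = h_0^m$, a short analysis of the commutator pairing $H \times H \to \Z$ coming from the extension should show that a fundamental domain for $\beta(Z_{\pi_1(M)}(g))$ inside $Z_H(\beta(g))$ has diameter of order $m = O(\modulus{g})$; hence $\rho_n$ grows at most linearly, yielding a linear CLF for this class as well. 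Combining all the cases delivers the quadratic upper bound claimed, with the Behrstock-Dru\c{t}u and Ji-Ogle-Ramsey contributions being the only ones that actually saturate this rate.
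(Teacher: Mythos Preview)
The paper's own argument is much terser than yours: it is entirely contained in the paragraph preceding the theorem, citing Behrstock--Dru\c{t}u \cite{BD11} for the non-geometric case, Theorem~\ref{thm:generalised SOL} (with $d=2$, $k=1$) for Sol, and Ji--Ogle--Ramsey \cite{JOR10} for Nil. The remaining six geometries are not discussed explicitly; the four ``easy'' ones ($S^3$, $S^2\times\R$, $\mathbb{E}^3$, $\mathbb{H}^3$) are presumably taken as standard, and the two Seifert-fibered-over-hyperbolic geometries are apparently meant to be absorbed into the Behrstock--Dru\c{t}u citation or regarded as known.

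Your treatment of $\mathbb{H}^2\times\R$ and $\widetilde{\mathrm{SL}_2(\R)}$ via the central-extension reduction after Corollary~\ref{cor:CLF for group extension with quotient Z} is therefore a genuinely different route, and a natural one given the paper's own machinery. But the phrase ``a short analysis \ldots\ should show'' marks a real gap: you have not actually bounded $\rho_n$. The idea you gesture at does work, and in fact yields more than you claim. The commutator pairing $e(a,b)=[\tilde a,\tilde b]\in\Z$ is bi-multiplicative and alternating (because the extension is central), so $e(h_0,h_0^m)=m\,e(h_0,h_0)=0$; hence the primitive root $h_0$ itself already lies in $\beta(Z_{\pi_1(M)}(g))$. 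For a cocompact Fuchsian $H$ the centraliser of a hyperbolic element is exactly $\langle h_0\rangle$, so this forces $\beta(Z_{\pi_1(M)}(g))=Z_H(\beta(g))$ and $\rho(g)=0$, not merely $O(\modulus{g})$. You also omit the case where $\beta(g)$ has finite order: when $\beta(g)$ is elliptic its centraliser in $H$ is finite (it lies in a point-stabiliser), giving $\rho(g)$ bounded by a constant depending only on $H$; and when $\beta(g)=1$ one has $\rho(g)=0$ trivially. With these details filled in, your argument produces a linear bound for these two geometries and the overall quadratic conclusion follows.
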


We now apply Theorem \ref{thm:generalised SOL} to the conjugacy of elements in parabolic subgroups of Hilbert modular groups. Such subgroups are isomorphic to a semidirect product $\Z^n \rtimes_\varphi \Z^{n-1}$, where $\varphi$ depends on the choice of Hilbert modular group and the boundary point determining the parabolic subgroup (see for example either \cite{Geer88} or \cite{Hirz73}). Because there is a finite number of cusps (see for example \cite{Shim63} or \cite{Geer88}), for each Hilbert modular group there are only finitely many $\varphi$ to choose from. Hence, by Theorem \ref{thm:generalised SOL}, any two elements in a parabolic subgroup are conjugate if and only if there exists a conjugator whose size is bounded exponentially in the sum of the sizes of the two given elements. More specifically:

\begin{cor}\label{cor:parabolic in Hilbert modular group}
Let $\Gamma = \SLOK$ be the Hilbert modular group corresponding to a finite, totally real field extension $K$ over $\Q$ of finite degree. There exists a positive constant $A$, depending only on $\Gamma$, such that a pair of elements $u,v$ in the same parabolic subgroup of $\Gamma$ are conjugate in $\Gamma$ if and only if there exists a conjugator $\gamma \in \Gamma$ such that
\[\modulus{\gamma} \leq A^{\modulus{u}+\modulus{v}} \textrm{.}\]
Furthermore, if $u,v$ are actually unipotent elements in $\Gamma$, then this upper bound is linear.
\end{cor}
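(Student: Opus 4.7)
The plan is to apply Theorem \ref{thm:generalised SOL} to each minimal parabolic subgroup of $\Gamma = \SLOK$ and then to reduce conjugacy in $\Gamma$ between elements of a common parabolic to conjugacy inside that parabolic. By reduction theory for Hilbert modular groups there are only finitely many $\Gamma$-conjugacy classes of minimal parabolic subgroups, one per cusp. Via the Iwasawa decomposition together with Dirichlet's unit theorem, each such parabolic $P$ is isomorphic to $\Z^d \rtimes_\varphi \Z^{d-1}$, where $\Z^d$ comes from the additive group of $\mathcal{O}_K$ and $\Z^{d-1}$ from the units of $\mathcal{O}_K$ modulo torsion. Because $K$ is totally real, the $d$ real embeddings simultaneously diagonalize the action of every unit, so $\varphi(\Z^{d-1})$ lies in an $\R$-split torus of $\SLdZ$. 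Since only finitely many $\varphi$ appear, the constants produced by Theorem \ref{thm:generalised SOL} can be chosen uniformly, yielding the single constants $A$ and $L$ promised in the statement.

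The next step is to argue that if $u,v$ lie in the same parabolic subgroup $P$ and are $\Gamma$-conjugate, then they are in fact $P$-conjugate by an element of comparable length. Writing $P$ as the stabilizer in $\Gamma$ of a cusp $\xi$ of the ambient symmetric space, a regular parabolic element $u \in P$ fixes only $\xi$ on the boundary; hence any $\gamma \in \Gamma$ with $\gamma u \gamma^{-1} = v \in P$ must send $\xi$ to $\xi$, and therefore $\gamma \in \Stab_\Gamma(\xi) = P$. The finitely many lower-dimensional strata of non-regular elements can be treated by the same self-normalization argument applied to the relevant subtori, or else absorbed into the constant $A$. With $\gamma$ placed inside $P$, Theorem \ref{thm:generalised SOL} (with $k = d-1 \geq 1$) furnishes a conjugator of size at most $A^{\modulus{u}+\modulus{v}}$ with respect to a word metric on $P$; since minimal parabolic subgroups are undistorted in Hilbert modular groups, this transfers to the required bound in the word metric of $\Gamma$. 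The unipotent case uses the same reduction: unipotent elements of $P$ lie in the normal translation subgroup $\Z^d \subset P$, so the claim reduces to controlling the restricted conjugacy length function $\RCLF_{\Z^d}^P$, which is linear by Step 1 of the proof of Theorem \ref{thm:generalised SOL}.

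The main obstacle is the self-normalization step: ensuring that a $\Gamma$-conjugator between two elements of the same parabolic can actually be taken inside the parabolic, with only a controlled loss in length. While the picture is geometrically natural (a conjugation must preserve the cusps fixed by the elements), making this precise uniformly requires careful treatment of degenerate elements (the identity, elements lying in proper subtori, torsion elements of $P$) and an appeal to undistortion of minimal parabolic subgroups in $\Gamma$, so that the bounds produced by Theorem \ref{thm:generalised SOL}---which are stated with respect to an intrinsic word metric on $P$---translate without loss into bounds in the word metric on $\Gamma$.
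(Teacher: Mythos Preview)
Your core approach matches the paper's: identify each minimal parabolic with a group $\Z^d \rtimes_\varphi \Z^{d-1}$ satisfying the hypotheses of Theorem \ref{thm:generalised SOL}, use finiteness of cusps to get uniform constants, and for the unipotent case invoke the linear bound on $\RCLF_{\Z^d}^P$ from Step 1 of that proof. The paper's own argument is extremely terse---it simply says ``Theorem \ref{thm:generalised SOL} gives the first conclusion'' and cites Step 1 for the second---so on the main line you are doing exactly what the paper does.

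Where you diverge is in the middle paragraph: you attempt to justify that a $\Gamma$-conjugacy between elements of a common parabolic $P$ can be replaced by a $P$-conjugacy (via the fixed-point-on-the-boundary argument) and that $P$ is undistorted in $\Gamma$. The paper does not carry out either of these steps; it treats the passage from Theorem \ref{thm:generalised SOL} to the corollary as immediate. So your additional work is not wrong, but it is not what the paper actually does---the paper effectively reads the statement as asserting a bound on conjugators coming from the parabolic itself, and leaves the self-normalization and undistortion issues unaddressed. Your version is more scrupulous about what ``conjugate in $\Gamma$'' demands; the paper's version is content to exhibit the short conjugator inside $P\subset\Gamma$ without arguing that every $\Gamma$-conjugacy descends to a $P$-conjugacy.
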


\begin{proof}
Since $u,v$ are in the same parabolic subgroup of $\Gamma$ then Theorem \ref{thm:generalised SOL} gives the first conclusion. The second conclusion, for unipotent elements, follows from the linear upper bound on the restricted conjugacy length function in the proof of Theorem \ref{thm:generalised SOL}.
\end{proof}

In \cite{Sale12liegroup} we look at conjugacy in semisimple real Lie groups. We are able to show that certain pairs of elements, including most unipotent elements, enjoy a linear conjugacy relationship. A consequence of this is an extension of the result for unipotent elements in Corollary \ref{cor:parabolic in Hilbert modular group} to lattices in other semisimple Lie groups.

\bibliography{../../../../AWS_bibli/bibliography}{}

\begin{thebibliography}{CGW09}

\bibitem[BD]{BD11}
Jason Behrstock and Cornelia Drutu.
\newblock Divergence, thick groups, and short conjugators.
\newblock {\em arXiv:1110.5005}.

\bibitem[BH99]{BH99}
Martin~R. Bridson and Andr{\'e} Haefliger.
\newblock {\em Metric spaces of non-positive curvature}, volume 319 of {\em
  Grundlehren der Mathematischen Wissenschaften [Fundamental Principles of
  Mathematical Sciences]}.
\newblock Springer-Verlag, Berlin, 1999.

\bibitem[BMV10]{BMV10}
O.~Bogopolski, A.~Martino, and E.~Ventura.
\newblock Orbit decidability and the conjugacy problem for some extensions of
  groups.
\newblock {\em Trans. Amer. Math. Soc.}, 362(4):2003--2036, 2010.

\bibitem[BNW08]{BNW08}
Laurent Bartholdi, Markus Neuhauser, and Wolfgang Woess.
\newblock Horocyclic products of trees.
\newblock {\em J. Eur. Math. Soc. (JEMS)}, 10(3):771--816, 2008.

\bibitem[BS78]{BS78}
Robert Bieri and Ralph Strebel.
\newblock Almost finitely presented soluble groups.
\newblock {\em Comment. Math. Helv.}, 53(2):258--278, 1978.

\bibitem[CGW09]{CGW09}
John Crisp, Eddy Godelle, and Bert Wiest.
\newblock The conjugacy problem in subgroups of right-angled artin groups.
\newblock {\em Journal of Topology}, 2(3):442--460, 2009.

\bibitem[CM77]{CM77}
Donald~J. Collins and Charles~F. Miller, III.
\newblock The conjugacy problem and subgroups of finite index.
\newblock {\em Proc. London Math. Soc. (3)}, 34(3):535--556, 1977.

\bibitem[Deh12]{Dehn12}
M.~Dehn.
\newblock Transformation der {K}urven auf zweiseitigen {F}l\"achen.
\newblock {\em Math. Ann.}, 72(3):413--421, 1912.

\bibitem[DP11]{DP11}
Tullia Dymarz and Irine Peng.
\newblock Bilipschitz maps of boundaries of certain negatively curved
  homogeneous spaces.
\newblock {\em Geom. Dedicata}, 152:129--145, 2011.

\bibitem[FM00]{FM01}
Benson Farb and Lee Mosher.
\newblock On the asymptotic geometry of abelian-by-cyclic groups.
\newblock {\em Acta Math.}, 184(2):145--202, 2000.

\bibitem[Hir73]{Hirz73}
Friedrich E.~P. Hirzebruch.
\newblock Hilbert modular surfaces.
\newblock {\em Enseignement Math. (2)}, 19:183--281, 1973.

\bibitem[JOR10]{JOR10}
Ronghui Ji, Crichton Ogle, and Bobby Ramsey.
\newblock Relatively hyperbolic groups, rapid decay algebras and a
  generalization of the {B}ass conjecture.
\newblock {\em J. Noncommut. Geom.}, 4(1):83--124, 2010.
\newblock With an appendix by Ogle.

\bibitem[MM00]{MM00}
H.~A. Masur and Y.~N. Minsky.
\newblock Geometry of the complex of curves. {II}. {H}ierarchical structure.
\newblock {\em Geom. Funct. Anal.}, 10(4):902--974, 2000.

\bibitem[Sal]{Sale12}
Andrew Sale.
\newblock The Geometry of the Conjugacy Problem in Wreath Products and Free Solvable Groups.
\newblock {\em arXiv:1307.6812 [math.GR]}.

\bibitem[Sal12]{SaleThesis}
Andrew Sale.
\newblock {\em The Length of Conjugators in Solvable Groups and Lattices of
  Semisimple Lie Groups}.
\newblock D{P}hil thesis, University of Oxford, 2012.

\bibitem[Sal14]{Sale12liegroup}
Andrew Sale.
\newblock Bounded conjugators for real hyperbolic and unipotent elements in
  semisimple lie groups.
\newblock {\em Journal of Lie Theory}, 24(1):259--305, 2014.

\bibitem[Sch72]{Schm72}
Wolfgang~M. Schmidt.
\newblock Norm form equations.
\newblock {\em Ann. of Math. (2)}, 96:526--551, 1972.

\bibitem[Shi63]{Shim63}
Hideo Shimizu.
\newblock On discontinuous groups operating on the product of the upper half
  planes.
\newblock {\em Ann. of Math. (2)}, 77:33--71, 1963.

\bibitem[Tao13]{Tao11}
Jing Tao.
\newblock Linearly bounded conjugator property for mapping class groups.
\newblock {\em Geom. Funct. Anal.}, 23(1):415-466, (2013).

\bibitem[vdG88]{Geer88}
Gerard van~der Geer.
\newblock {\em Hilbert modular surfaces}, volume~16 of {\em Ergebnisse der
  Mathematik und ihrer Grenzgebiete (3) [Results in Mathematics and Related
  Areas (3)]}.
\newblock Springer-Verlag, Berlin, 1988.

\end{thebibliography}
\bibliographystyle{alpha}

\end{document}